\newtheorem{theorem}{Theorem}
\newtheorem{lemma}[theorem]{Lemma}
\newtheorem{corollary}[theorem]{Corollary}
\newtheorem{proposition}[theorem]{Proposition}
\theoremstyle{definition}
\newtheorem{definition}[theorem]{Definition}
\newtheorem{example}{Example}
\theoremstyle{remark}
\newtheorem*{remark}{Remark}
\theoremstyle{definition}
\newcommand{\RR}{\mathbb{R}}
\newcommand{\ZZ}{\mathbb{Z}}
\DeclareMathOperator{\diam}{diam}
\begin{document}
	
	\title[Sufficiently connected PSC manifolds]{Classifying sufficiently connected PSC manifolds in $4$ and $5$ dimensions}
	\author[O. Chodosh]{Otis Chodosh}
	\address{OC: Department of Mathematics, Stanford University, Building 380, Stanford, California 94305, USA.}
	\email{ochodosh@stanford.edu}
	\author[C. Li]{Chao Li}
	\address{CL: Courant Institute of Mathematical Sciences
New York University, 251 Mercer St,
New York, NY 10012}
	\email{chaoli@nyu.edu}
	\author[Y. Liokumovich]{Yevgeny Liokumovich}
	\address{YL: Department of Mathematics, University of Toronto, 40 St
		George Street, Toronto, ON M5S 2E4, Canada.}
	\email{ylio@math.toronto.edu}

	\maketitle
	
	\begin{abstract} 
		We show that if $N$ is a closed manifold of dimension 
		$n=4$ (resp. $n=5$) with $\pi_2(N) = 0$ 
		(resp. $\pi_2(N)=\pi_3(N)=0$) that admits a metric of positive scalar curvature, then a finite cover $\hat N$ of $N$ is homotopy equivalent to $S^n$ or connected sums of $S^{n-1}\times S^1$. Our approach combines recent advances in the study of positive scalar curvature with a novel argument of Alpert--Balitskiy--Guth.
		
		Additionally, we prove a more general mapping version of this result. In particular, this implies that if $N$ is a closed manifold of dimensions $4$ or $5$, and $N$ admits a map of nonzero degree to a closed aspherical manifold, then $N$ does not admit any Riemannian metric with positive scalar curvature.
		
	\end{abstract}

	\section*{Introduction}
	
	We are concerned here with the problem of classification of manifolds admitting positive scalar curvature (PSC). For closed (compact, no boundary) $2$- and $3$-manifolds this problem is completely resolved, namely the sphere and projective plane are the only closed surfaces admitting positive scalar curvature and a $3$-manifold admits positive scalar curvature if and only if it has no aspherical factors in its prime decomposition. In particular, a $3$-manifold admitting positive scalar curvature has a finite cover diffeomorphic to $S^3$ or to a connected sum of finitely many $S^2\times S^1$. 
	
	The main result of this paper is the following partial generalization of this statement to dimensions $n=4,5$.

	\begin{theorem}\label{theo:main}
		Suppose that $N$ is a closed smooth $n$-manifold admitting a metric of positive scalar curvature and
		\begin{itemize}
			\item  $n=4$ and $\pi_2(N)=0$, or
			\item  $n=5$ and $\pi_2(N)=\pi_3(N) = 0$.
		\end{itemize}
		Then a finite cover $\hat N$ of $N$ is homotopy equivalent to $S^n$ or connected sums of $S^{n-1}\times S^1$. 
	\end{theorem}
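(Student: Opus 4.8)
The plan is to reduce Theorem~\ref{theo:main} to a geometric input from recent scalar-curvature methods and a group-theoretic conclusion furnished by the Alpert--Balitskiy--Guth argument, and then to recognize the resulting finite cover by homotopy theory. After rescaling so that $\operatorname{scal}_g \ge 1$, pull $g$ back to the universal cover $\widetilde N$, on which $\Gamma := \pi_1(N)$ acts freely, properly, cocompactly and isometrically. The input I would invoke (either established earlier in the paper or cited from the recent literature) is that, under exactly the stated hypotheses, $\widetilde N$ is one-dimensional at large scales: there is a constant $C=C(n)$ and a continuous map from $\widetilde N$ onto a $1$-dimensional simplicial complex all of whose fibers have diameter $\le C$, i.e. $\widetilde N$ has macroscopic dimension $\le 1$. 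This is where the assumptions $\pi_2(N)=0$ (for $n=4$) and $\pi_2(N)=\pi_3(N)=0$ (for $n=5$) are used: the width estimate is proved by the $\mu$-bubble/stable minimal hypersurface slicing method, which in these dimensions produces smooth slices of dimension $n-1\in\{3,4\}$, and the vanishing of the low homotopy groups keeps those slices free of essential spheres, so that the iteration continues down to dimension $1$ rather than stopping earlier.

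Since $\Gamma$ acts cocompactly on $\widetilde N$, the Milnor--\v{S}varc lemma makes $\Gamma$ quasi-isometric to $\widetilde N$, so $\Gamma$ itself has macroscopic dimension $\le 1$; moreover $\Gamma$ is finitely presented because $N$ is a closed manifold. The crux, which is the novel argument of Alpert--Balitskiy--Guth, is that this forces $\Gamma$ to be virtually free: feeding the bounded-fiber map onto the $1$-complex into a nerve/Mayer--Vietoris analysis of the induced cover of $\widetilde N$ yields a simplicial tree on which $\Gamma$ acts cocompactly with finite vertex and edge stabilizers --- finite presentability being used, through an accessibility argument, to guarantee that the recursive cutting terminates --- and Bass--Serre theory then identifies $\Gamma$ with the fundamental group of a finite graph of finite groups, precisely the class of virtually free groups. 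Now take a finite cover $\hat N\to N$ with $\pi_1(\hat N)$ free, and pass to a further double cover if necessary (finite-index subgroups of free groups are free) so that $\hat N$ is closed, oriented, and $\pi_1(\hat N)=F_k$ for some $k\ge 0$; the homotopy hypotheses on $N$ pass to $\hat N$.

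It remains to recognize $\hat N$ up to homotopy. If $k=0$, then $\hat N$ is simply connected with $\pi_i(\hat N)=0$ for $2\le i\le n-2$, so by Hurewicz and Poincar\'e duality $\hat N$ is a homotopy $n$-sphere. If $k\ge 1$, the universal cover $\widetilde{\hat N}$ is $(n-2)$-connected and noncompact, so its cellular chain complex over $\ZZ[F_k]$ --- a finite complex of finitely generated free modules --- has homology only in degrees $0$ and $n-1$; a dimension shift, using that the trivial module has a length-one free resolution over $\ZZ[F_k]$ and that finitely generated projective $\ZZ[F_k]$-modules are free, shows that $H_{n-1}(\widetilde{\hat N})$ is free of a rank pinned down by Poincar\'e duality (equivalently by $\chi(\hat N)$), and the same holds for $\#_k(S^{n-1}\times S^1)$ with the same rank. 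One then constructs a degree-one map $\#_k(S^{n-1}\times S^1)\to\hat N$ that is a $\pi_1$-isomorphism --- building it cell by cell and using the freedom in the codimension-one cells to kill the obstruction on the top cell, or equivalently by surgery below the middle dimension on $\hat N$ --- and, after passing to universal covers, one uses that a degree-one map of oriented Poincar\'e complexes is split surjective on $\ZZ[F_k]$-homology, so that the rank count makes it an isomorphism; Whitehead's theorem then upgrades the map to a homotopy equivalence, which finishes the proof.

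The main obstacle is the middle step: extracting a genuine $\Gamma$-tree, and hence virtual freeness, from the soft statement that $\widetilde N$ has macroscopic dimension $\le 1$. The width map is far from a fibration, its fibers are a priori wild, and it carries no equivariance, so converting the scalar-curvature output into this rigid algebraic conclusion is the delicate point, and is exactly where the Alpert--Balitskiy--Guth argument, together with the finite presentability of $\Gamma$, is essential. By contrast the geometric input is deep but citable, and the final homotopy recognition is routine once one is prepared to work with Poincar\'e duality and modules over $\ZZ[F_k]$.
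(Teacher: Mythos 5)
Your overall skeleton (Urysohn $1$-width bound on $\widetilde N$ $\Rightarrow$ $\pi_1(N)$ virtually free $\Rightarrow$ homotopy recognition of the finite cover) matches the paper, but the middle step is where your argument has a genuine gap and a misattribution.

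First, the misattribution: the Alpert--Balitskiy--Guth idea enters the paper only in the passage \emph{from the filling-radius estimate to the Urysohn width bound} (Proposition \ref{prop:fill-to-Ur-w}). It plays no role in the derivation of virtual freeness. You have folded these two distinct steps together and credited the ``novel argument'' to the wrong place.

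Second, and more seriously, your proposed mechanism for turning the macroscopic-dimension-$\le 1$ statement into virtual freeness --- ``feeding the bounded-fiber map onto the $1$-complex into a nerve/Mayer--Vietoris analysis of the induced cover of $\widetilde N$ yields a simplicial tree on which $\Gamma$ acts cocompactly with finite vertex and edge stabilizers'' --- is not a proof and is not the paper's route. You yourself flag that the width map carries no equivariance and has wild fibers, which is precisely why a nerve construction does not hand you a $\Gamma$-tree with finite stabilizers, and you do not resolve this tension. The paper instead proves a clean group-theoretic statement (Proposition \ref{prop:group.ends}): \emph{no finitely generated subgroup of $\pi_1(N)$ can have exactly one end}. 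The argument there is entirely different from what you describe: given a one-ended f.g.\ subgroup $G$, one passes to the cover $N_0$ with $\pi_1(N_0)=G$, takes a compact $K\subset N_0$ carrying the generators, forms the cover $\tilde K\to K$ with deck group $G$, lifts it into $\widetilde N$ via Lemma \ref{lemm:tilde.i.proper}, constructs a geodesic line in $\tilde K$, and then exploits the uniformly bounded diameters of connected components of distance spheres in $\widetilde N$ (conclusion \eqref{eq.urysohn}) to contradict one-endedness of $\tilde K$. Virtual freeness then follows from Stallings/Dunwoody accessibility and Bass--Serre theory (Corollary \ref{coro:virtually-free}). Your plan would need to either carry out the geodesic-line/ends argument or produce the equivariant tree some other way; as written it asserts the hard step without a workable mechanism.

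Two smaller points. Your description of the width estimate itself (``iteration continues down to dimension $1$'') does not match what the paper proves: the geometric output is a codimension-$2$ filling-radius estimate (Corollary \ref{coro:filling-PSC}), obtained via one $\mu$-bubble pass plus the slice-and-dice decomposition and the small-fill Proposition \ref{prop:FF-iso-cover}, and the $1$-width bound is then extracted by a separate linking argument; there is no iteration to dimension $1$. Finally, for the homotopy recognition you reprove a known result: the paper simply cites \cite[Sections 2--3]{GadgilSeshadri2009topology} at this point. Your sketch (Hurewicz/Poincar\'e duality for $k=0$; a $\ZZ[F_k]$-chain-complex and degree-one-map argument for $k\ge1$) is in the right spirit, but the step ``construct a degree-one map $\#_k(S^{n-1}\times S^1)\to\hat N$ \dots by surgery below the middle dimension'' is a substantial assertion you have not justified; citing the reference is cleaner and is what the paper does.
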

	
	It was shown in \cite{ChodoshLi2020generalized,Gromov2020metrics} that if a closed $N^n$ is aspherical (i.e., $\pi_k(N)=0$ for all $k\geq 2$) and $n=4,5$  then there is no Riemannian metric of positive scalar curvature on $N$. Theorem \ref{theo:main} can thus be seen as a refinement of this into a positive result. 
	
	\begin{remark}
		By \cite[Theorem 1.3]{GadgilSeshadri2009topology} (see also \cite{Freedman,Milnor:hcobordism,KreckLuck2009rigidity}), we have that if $n=4$ and $\hat N$ is homotopy equivalent to $S^4$ or $S^3\times S^1$, or if $n=5$ (with no further restriction on the homotopy type), then homotopy equivalence in the conclusion to Theorem \ref{theo:main} can be upgraded to homeomorphism.
	\end{remark}

	We also prove a more general
	``mapping'' version of Theorem \ref{theo:main}.
	
	\begin{theorem} \label{theo:mapping}
		Suppose that $N$ is a closed smooth $n$-manifold with a metric of positive scalar curvature and
		there exists a non-zero degree map 
		$f:N \rightarrow X$, to a manifold $X$ satisfying
		\begin{itemize}
			\item  $n=4$ and $\pi_2(X)=0$, or
			\item  $n=5$ and $\pi_2(X)=\pi_3(X) = 0$.
		\end{itemize}
		Then a finite cover $\hat X$ of $X$ is homotopy equivalent to $S^n$ or connected sums of $S^{n-1}\times S^1$. 
	\end{theorem}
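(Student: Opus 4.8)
The plan is to follow the method of proof of Theorem~\ref{theo:main} rather than to invoke it as a black box: one cannot simply surger $N$ into a manifold satisfying the hypotheses of Theorem~\ref{theo:main}, because in dimension $5$ killing $\pi_3$ would require surgery on embedded $3$-spheres, which has codimension $2$ and does not preserve positive scalar curvature. Instead I would carry the nonzero-degree map $f$ through the argument and use the homotopy groups of $X$ wherever the proof of Theorem~\ref{theo:main} uses those of $N$. Recall that that proof establishes structural restrictions on $\pi_1(N)$ --- in particular that it is virtually free --- and then invokes the topological classification of closed $4$- and $5$-manifolds with (virtually) free fundamental group and vanishing $\pi_2$ (resp.\ $\pi_2$ and $\pi_3$); the goal here is to show instead that $\pi_1(X)$ is virtually free. (After passing to orientation double covers we may assume $N$ and $X$ oriented.)

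First I would simplify $f$. Since $f$ has nonzero degree, $f_*\pi_1(N)$ has finite index in $\pi_1(X)$ (otherwise $f$ factors through a noncompact cover of $X$ and has degree $0$). Replacing $X$ by the finite cover $\hat X$ corresponding to $f_*\pi_1(N)$ --- which is legitimate, since $\hat X$ still satisfies the vanishing hypothesis and the conclusion for $\hat X$ implies it for $X$ --- I may assume $f_*\colon\pi_1(N)\to\pi_1(X)=:\Gamma$ is onto. Next, as $\pi_1(N)$ and $\Gamma$ are finitely presented, $\ker f_*$ is the normal closure of finitely many elements; representing these by disjoint framed embedded circles in $N$ and performing surgery yields a closed $n$-manifold $N_1$ with $\pi_1(N_1)\cong\Gamma$ and a degree-$d$ map $f_1\colon N_1\to X$, $d\neq 0$. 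Positive scalar curvature is preserved because these surgeries have codimension $n-1\ge 3$ (Gromov--Lawson, Schoen--Yau), and the degree is preserved because each surgered circle is null-homotopic in $X$, so $f$ extends across the surgery trace and exhibits $N,N_1$ as cobordant over $X$. Passing to universal covers, I obtain a simply connected positive-scalar-curvature manifold $N^\ast=\widetilde{N_1}$ carrying a free cocompact $\Gamma$-action with $N^\ast/\Gamma=N_1$, together with a proper, $\Gamma$-equivariant, degree-$d$ map $F\colon N^\ast\to\widetilde X$, where $\widetilde X$ is $2$-connected ($n=4$) or $3$-connected ($n=5$).

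The heart of the proof is to run the argument of Theorem~\ref{theo:main} on $N^\ast$ with $F$ in hand. That argument produces, via slicing and $\mu$-bubble techniques and the Alpert--Balitskiy--Guth combinatorial estimate, certain $2$-spheres --- resp.\ $2$- and $3$-spheres --- in covers of $N$, and exploits that they bound, i.e.\ that $\pi_2(\widetilde N)=0$ (resp.\ $\pi_2(\widetilde N)=\pi_3(\widetilde N)=0$), to control the large-scale topology. Since these spheres map into $\widetilde X$ under $F$, one uses instead $\pi_2(\widetilde X)=0$ (resp.\ $\pi_2(\widetilde X)=\pi_3(\widetilde X)=0$); the homological and filling estimates of the original argument survive pushforward under the proper degree-$d$ map $F$, with $d\ne 0$ guaranteeing that the relevant pushed-forward classes do not vanish. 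The output is that $\Gamma=\pi_1(X)$ is virtually free. Passing to a finite cover $\hat X$ of $X$ with $\pi_1(\hat X)$ free of rank $k$, one then has a closed $n$-manifold with free fundamental group and $\pi_2(\hat X)=0$ (resp.\ $\pi_2(\hat X)=\pi_3(\hat X)=0$), which by the classification used in Theorem~\ref{theo:main} is homotopy equivalent to $S^n$ (if $k=0$) or to a connected sum of $k$ copies of $S^{n-1}\times S^1$, giving the conclusion.

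The main obstacle is the middle step. One must check that the geometric core of the proof of Theorem~\ref{theo:main} uses the vanishing of $\pi_2$ and $\pi_3$ of $N$ only through the null-homotopy of the spheres it constructs, so that these uses can be rerouted through $f$, and that replacing an identification with $\widetilde N$ by a mere proper degree-$d$ map $F\colon N^\ast\to\widetilde X$ does not weaken the quantitative (width and filling) conclusions. Everything else --- the surgery reductions and the concluding step in low-dimensional topology --- is routine.
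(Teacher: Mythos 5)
Your high-level plan is sound, and you correctly diagnose both why a naive surgery reduction to Theorem~\ref{theo:main} fails (killing $\pi_3$ in dimension $5$ is codimension-$2$ surgery) and what the real task is: produce a proper nonzero-degree map from a PSC cover to $\widetilde X$ and then reroute every appeal to $\pi_2(\widetilde N),\pi_3(\widetilde N)$ in the filling argument through $\pi_2(\widetilde X),\pi_3(\widetilde X)$ instead. Where you diverge from the paper is in how that map is manufactured. You first pass to the finite cover of $X$ corresponding to $\im f_*$, then surger $N$ along disjoint embedded circles representing normal generators of $\ker f_*$ (codimension $n-1\geq 3$ preserves PSC by Gromov--Lawson/Schoen--Yau; degree is preserved since the trace is a cobordism over $X$), arriving at $f_1\colon N_1\to X$ with $(f_1)_*$ an isomorphism, and then lift to universal covers. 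The paper avoids surgery entirely: it takes $\hat N\to N$ to be the cover corresponding to the normal subgroup $\ker f_\#\subset\pi_1(N)$, lifts $f\circ p$ to $\hat f\colon\hat N\to\widetilde X$, and proves directly that $\hat f$ is proper and $\deg\hat f=\deg f$ (with a separate argument that $\im f_\#$ has finite index, hence $\hat N$ is noncompact). Your route works but buys nothing and costs a little: you must choose a fresh PSC metric on $N_1$ and rescale so that $f_1$ is distance-decreasing (a hypothesis you omit but which is essential for the diameter control in the filling estimate), whereas the paper keeps the original PSC metric throughout. What both approaches then share --- and what you explicitly flag as ``the main obstacle'' but do not carry out --- is the paper's Lemma~\ref{lemm:mapping.fill.rad}: pull a filling chain $\Sigma_{n-1}$ of a cycle $\Sigma_{n-2}\subset\widetilde X$ back to the PSC cover by transversality, run the $\mu$-bubble and slice-and-dice machinery there, push the resulting small pieces forward to $\widetilde X$, and fill them there using $H_2(\widetilde X)=H_3(\widetilde X)=0$. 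This gives a filling radius bound only for $\deg(f)\cdot\Sigma_{n-2}$ rather than $\Sigma_{n-2}$ itself, which is why the Urysohn width argument (an intersection-number argument) still goes through. With that step filled in, the rest (ends of groups, virtual freeness, Gadgil--Seshadri classification) is as you say.
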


	We note that the following result immediately follows from Theorem \ref{theo:mapping}. 
	
	\begin{corollary}\label{coro.degree}
		Let $n\in \{4,5\}$, $X, N$ be closed oriented manifolds of dimension $n$, $X$ is aspherical. Suppose there exists a map $f: N\to X$ with $\deg f \ne 0$. Then $N$ does not admit any Riemannian metric of positive scalar curvature.
	\end{corollary}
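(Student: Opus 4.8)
The plan is to deduce this directly from Theorem~\ref{theo:mapping} by a contradiction argument. Suppose, contrary to the claim, that $N$ carries a metric of positive scalar curvature. Since $X$ is a closed aspherical manifold, all of its higher homotopy groups vanish; in particular $\pi_2(X) = 0$ in the case $n = 4$, and $\pi_2(X) = \pi_3(X) = 0$ in the case $n = 5$. Because $f \colon N \to X$ has nonzero degree, the hypotheses of Theorem~\ref{theo:mapping} are satisfied, so a finite cover $\hat X \to X$ is homotopy equivalent to $S^n$ or to a connected sum of finitely many copies of $S^{n-1}\times S^1$.

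It remains to observe that this is incompatible with $X$ being aspherical. A finite (indeed, any) covering space of an aspherical manifold is again aspherical, since the universal cover of $\hat X$ is the universal cover of $X$, which is contractible; hence $\pi_k(\hat X) = 0$ for every $k \geq 2$. On the other hand, the universal cover of $S^n$ is $S^n$, which has $\pi_n \cong \ZZ$, and for $k \geq 1$ the universal cover of $\#_k(S^{n-1}\times S^1)$ is simply connected with nonzero $(n-1)$st homology, hence has nonzero $\pi_{n-1}$ by the Hurewicz theorem. Since $n - 1 \geq 3$, either possibility contradicts the vanishing of the higher homotopy groups of $\hat X$. (The empty connected sum is $S^n$, already covered by the first case.) This contradiction shows that $N$ admits no metric of positive scalar curvature.

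There is no real obstacle here: Theorem~\ref{theo:mapping} does all the work, and the corollary follows once one records the elementary fact that, in dimensions $n \geq 3$, neither $S^n$ nor a connected sum of copies of $S^{n-1}\times S^1$ is aspherical, and hence neither is any space homotopy equivalent to such a manifold.
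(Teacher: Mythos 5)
Your proof is correct and matches the paper's approach exactly: the paper states that the corollary ``immediately follows'' from Theorem~\ref{theo:mapping} without spelling out the details, and you have supplied precisely the missing observation, namely that finite covers of aspherical manifolds are aspherical while $S^n$ and $\#_k(S^{n-1}\times S^1)$ are not (for $n\geq 3$).
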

	
	Recall that it was previously shown in \cite{ChodoshLi2020generalized,Gromov2020metrics} that closed aspherical (i.e., $\pi_k(N)=0$ for all $k\geq 2$) $n$-manifolds do not admit PSC for $n=4,5$. In \cite{Gromov2020metrics} a related statement was proven for manifolds admitting proper distance decreasing maps to uniformly contractible manifolds. In fact, Corollary \ref{coro.degree} seems to have been asserted by Gromov in the May 2021 version of his four lectures on scalar curvature \cite[p.\ 144-5]{gromov2019lectures}, but the (relatively simple) lifting argument does not appear there. 
	
	\subsection{Urysohn width bounds}
	
	Recall that a metric space $(X,d)$ has \emph{Urysohn $q$-width} $\leq \Lambda$ if there is a $q$-dimensional simplicial complex $K$ and a continuous map $X\to K$ so that $\diam f^{-1}(s) \leq \Lambda$ for all $s\in K$. As such, having finite Urysohn $q$-width implies that a manifold looks $\leq q$-dimensional in some macroscopic sense. 
	
	A well-known conjecture (cf.\ \cite[p.\ 63]{gromov2019lectures}) of Gromov posits that an $n$-manifold with scalar curvature $\geq 1$ has finite Urysohn $(n-2)$-width. Various forms of this conjecture are proven for $n=3$ \cite{GromovLawson,Katz1988diameter,MarquesNeves:width-psc,LiokumovichMaximo2020waist}, while the conjecture is largely open for $n\geq 4$ (some progress has been achieved in \cite{Boltov,BoltovDranishnikov}). 
	
	A key component in the proof of Theorem \ref{theo:main} is the following result.
	\begin{theorem}\label{thm:urysohn}
		For $(N^n,g)$ satisfying the hypothesis of Theorem \ref{theo:main}, the universal cover $(\tilde N,\tilde g)$ has finite Urysohn $1$-width. 
	\end{theorem}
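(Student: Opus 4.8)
The plan is the following. By passing to the universal cover (and pulling back $g$) we may replace $(N,g)$ with $(\tilde N,\tilde g)$, which is complete, simply connected, of bounded geometry --- all curvature and injectivity-radius bounds are uniform since $N$ is closed --- satisfies $\mathrm{Scal}_{\tilde g}\ge\sigma>0$, and has $\pi_2(\tilde N)=0$ (and $\pi_3(\tilde N)=0$ when $n=5$). If $\pi_1(N)$ is finite then $\tilde N$ is compact and the statement is trivial, so assume $\tilde N$ is noncompact. The argument combines three ingredients: (i) an iterated $\mu$-bubble construction slicing $\tilde N$ into $3$-dimensional pieces that carry a generalized positive scalar curvature structure; (ii) the resolution of Gromov's Urysohn width conjecture in dimension $3$, applied to those pieces; and (iii) a descent argument of Alpert--Balitskiy--Guth that lowers the Urysohn width one step at a time, consuming the vanishing of $\pi_2$ (and of $\pi_3$ when $n=5$).

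For the slicing, fix a point $p\in\tilde N$ and, for each $R\ge 1$, run the warped prescribed-mean-curvature ($\mu$-bubble) construction of Gromov and Chodosh--Li \cite{Gromov2020metrics,ChodoshLi2020generalized} in the annular region $\{\,R-L<d(p,\cdot)<R+L\,\}$, where $L=L(n,\sigma)$ is the positive scalar curvature band-width constant. This yields a closed separating hypersurface $\Sigma_R^{n-1}$ on which the second-variation (stability) inequality, together with $\mathrm{Scal}_{\tilde g}\ge\sigma$, produces a positive solution of the associated Schr\"odinger-type equation, i.e.\ a generalized PSC structure, with all geometric quantities controlled in terms of $n$, $\sigma$, and the uniform geometry of $\tilde N$. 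When $n=5$ one iterates one dimension lower, slicing each $\Sigma_R^4$ by $\mu$-bubbles taken with respect to its own intrinsic distance function; the essential point is that this operation preserves the generalized PSC class. In this way one obtains an $(n-3)$-parameter family $\{\Gamma_\alpha^3\}$ of closed generalized PSC $3$-manifolds with uniform geometric bounds, sweeping out $\tilde N$ so that consecutive slices cobound regions of uniformly bounded thickness.

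Next, by the three-dimensional Urysohn width theorem for (generalized) PSC \cite{MarquesNeves:width-psc,LiokumovichMaximo2020waist,GromovLawson}, each $\Gamma_\alpha^3$ admits a continuous map to a graph all of whose point-preimages have diameter $\le C(n,\sigma)$. Using the uniform bounded geometry to interpolate these maps continuously over the $(n-3)$-dimensional parameter space and across the thin slabs between consecutive slices, one glues them into a single continuous map $\Phi\colon \tilde N\to\cK$, where $\cK$ is an at most $(n-2)$-dimensional complex and every fiber of $\Phi$ has diameter $\le C'(n,\sigma)$; hence $\tilde N$ has finite Urysohn $(n-2)$-width. It remains to improve this to finite Urysohn $1$-width, which is $n-3$ further steps ($1$ step if $n=4$, $2$ steps if $n=5$). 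This is where the topological hypotheses enter: by the Hurewicz theorem $\pi_2(\tilde N)=0$ gives $H_2(\tilde N)=0$, and when $n=5$ also $H_3(\tilde N)=0$, and one invokes the Alpert--Balitskiy--Guth principle that, for a complete manifold of bounded geometry, a finite Urysohn $q$-width bound upgrades to a finite Urysohn $(q-1)$-width bound once the homology in the degrees governing the $q$-dimensional part of the target vanishes --- informally, the $q$-cells of $\cK$ may then be collapsed one at a time without the fibers of $\Phi$ growing, there being no homology class to obstruct the collapse. Iterating from $q=n-2$ down to $q=2$ gives finite Urysohn $1$-width of $\tilde N$.

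I expect the main obstacle to be precisely this last descent: turning a purely homological vanishing statement into a \emph{quantitative} reduction of the Urysohn width, while retaining uniform control on the diameters of the fibers, is the genuinely new ingredient and the delicate point. A secondary, more routine, difficulty is threading uniform constants through the iterated $\mu$-bubble slicing and through the interpolation that builds the global map $\Phi$ --- in particular, making the graph-valued maps on nearby $3$-dimensional slices compatible so that they assemble continuously.
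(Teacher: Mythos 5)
Your proposal takes a genuinely different route from the paper, and the route has a serious gap at exactly the step you flag as ``the genuinely new ingredient.''

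The paper does not pass through an intermediate finite Urysohn $(n-2)$-width bound and then iterate downward. Instead, it proves a \emph{filling radius} estimate for codimension-two cycles in $\tilde N$ (Corollary \ref{coro:filling-PSC}): every closed embedded $(n-2)$-submanifold $\Sigma_{n-2}\subset\tilde N$ bounds inside $B_L(\Sigma_{n-2})$. The hypotheses $\pi_2=0$ (and $\pi_3=0$ when $n=5$) enter \emph{here}, not at the end: by Hurewicz they give $H_{n-3}(\tilde N)=H_{n-2}(\tilde N)=0$, which via Proposition \ref{prop:FF-iso-cover} lets one fill the small-diameter cycles produced by the $\mu$-bubble slice-and-dice decomposition inside balls of uniformly bounded radius, yielding the global filling estimate. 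Then Proposition \ref{prop:fill-to-Ur-w} converts the filling estimate directly into the Urysohn $1$-width bound by a \emph{linking argument}: if a level set of $d(p,\cdot)$ had a component of diameter $>20L$, one builds a triangle $T=\eta_x*\gamma*(-\eta_y)$ out of two minimizing geodesics and an arc in the level set, sets $\Sigma_{n-2}=\partial B_{4L+\ell}(x)\cap\partial B_{L+\ell}(\eta_x)$, checks that $T$ has algebraic intersection $1$ with $\Sigma'_{n-1}=\partial B_{4L+\ell}(x)\cap\overline{B_{L+\ell}(\eta_x)}$ while staying at distance $>L$ from $\Sigma_{n-2}$, and then uses the filling inside $B_L(\Sigma_{n-2})$ to produce a cycle of nonzero linking number with $T$ in a simply connected space --- a contradiction. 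That linking argument is what the paper credits to Alpert--Balitskiy--Guth.

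The step in your write-up where you ``collapse the $q$-cells of $\cK$ one at a time'' using $H_q(\tilde N)=0$ to reduce Urysohn $q$-width to Urysohn $(q-1)$-width does not exist in the literature and is not what Alpert--Balitskiy--Guth prove; there is no known mechanism that turns a homological vanishing statement into a quantitative lowering of Urysohn width while preserving fiber-diameter bounds, and you yourself acknowledge you don't know how to do it. (A secondary issue: the interpolation/gluing that assembles the $3$-dimensional Urysohn maps on the slices $\Gamma^3_\alpha$ into a global map $\Phi\colon\tilde N\to\cK$ with bounded fibers is also not a routine step; the paper avoids it entirely.) To repair the argument you should replace both the intermediate $(n-2)$-width claim and the descent by the filling-radius-plus-linking strategy: prove the $(n-2)$-cycle filling estimate using $\mu$-bubbles, slice-and-dice, and the Hurewicz/cover argument, and then run the linking contradiction against the level sets of $d(p,\cdot)$.
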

	This follows by combining Corollary \ref{coro:filling-PSC} and Proposition \ref{prop:fill-to-Ur-w} below. A simple example where Theorem \ref{theo:main} applies is the product metric on $S^1\times S^3$ whose universal cover is $\mathbb{R}\times S^3$, clearly of finite Urysohn $1$-width. On the other hand, we note that the higher connectivity hypothesis in Theorem \ref{thm:urysohn} is necessary: compare with $T^2\times S^2$. 
	
	\begin{remark}
		Consider a metric $g_R$ on $S^3$ formed by capping off a cylinder $[-R,R] \times S^2(1)$ with hemispheres and smoothing out the resulting metric, so that the scalar curvature is $\geq 1$. The product metric $(S^1(1),g_S) \times (S^3,g_R)$ has scalar curvature $\geq1$ but the universal cover has Urysohn $1$-width $\sim R$. As such, the estimate in Theorem \ref{thm:urysohn} cannot be made quantitative (essentially, the issue is that the universal cover converges to $\mathbb{R}^2\times S^2(1)$ which has nontrivial $\pi_2$). 
	\end{remark}
	
	As we were finishing this paper, we discovered that in his recently updated (May 2021) version of his four lectures on scalar curvature, Gromov has indicated a proof of the classification of PSC $3$-manifolds  \cite[p.\ 135]{gromov2019lectures} by using finiteness of the $1$-Ursyohn width of the universal cover. Our proof of Theorem \ref{theo:main} follows a similar strategy once Theorem \ref{thm:urysohn} is proven.
	
	\subsection{Remarks on positive isotropic curvature} Theorem \ref{theo:main} has an interesting relationship to well-known conjectures of Gromov \cite[Section 3, (b)]{gromov1996positive} and Schoen \cite{Schoen2007conjecture} concerning the topology of closed $n$-manifolds admitting a metric with positive isotropic curvature (PIC). Namely, they (respectively) conjecture that if a closed manifold has a PIC metric then the fundamental group is virtually free and a finite cover is diffeomorphic to either a sphere or connect sums of finitely many $S^{1}\times S^{n-1}$.

	There have been distinct approaches to such a question, relying on either minimal surface theory or Ricci flow. Using minimal surface theory, Micallef--Moore have shown that if $M^n$ is a closed PIC-manifold then $\pi_k(M) = 0$ for $k=2,\dots,[\tfrac n2]$ \cite{MM}. In particular, if $M$ is simply connected, then it is homeomorphic to a sphere. In a related work, Fraser has proven that an $n$-manifold ($n\geq 5$) with PIC does not contain a subgroup isomorphic to $\ZZ\oplus \ZZ$ \cite{Fraser}. 
	
	On the other hand, using Ricci flow, Hamilton has classified $4$-manifolds admitting PIC that do not contain nontrivial incompressible $(n-1)$-dimensional space forms \cite{Hamilton:PIC-RF}. This was extended to prove the Gromov--Schoen conjectures for $n=4$ in \cite{ChenTangZhu:PIC-4d}. In higher dimensions, Brendle--Schoen \cite{BrendleSchoen2009spheretheorem} and Nguyen \cite{Nguyen} proved the PIC condition is preserved under the Ricci flow; this is an important ingredient in Brendle--Schoen's proof of the differentiable sphere theorem. Recently, Brendle has achieved a breakthrough in the study of the Ricci flow of PIC-manifolds and has extended Hamilton's result to dimensions $n\geq 12$ \cite{Brendle:PIC-RF}; as above, this result has been used to prove the Gromov--Schoen conjectures for $n\geq 12$ \cite{Huang:PIC-nd}. 
	
	We note that since PIC implies PSC, combining \cite{MM} with Theorem \ref{theo:main} yields an alternative proof of Gromov's conjecture (the fundamental group is virtually free) for $n=4$ and proves a weak version of Schoen's conjecture for $n=4$ (i.e., with homotopy equivalence replacing diffeomorphism). Furthermore, Theorem \ref{theo:main} implies that a PIC $5$-manifold with $\pi_3(M) = 0$ satisfies Gromov's conjecture and the same weak version of Schoen's conjecture. It is an interesting question if a $5$-manifold with PIC has $\pi_3(M)=0$ (note that $\pi_2(M)=0$ by \cite{MM}).

	\subsection{Organization of the paper}
	In Section \ref{sec:filling} we revisit the filling radius estimates from \cite{ChodoshLi2020generalized,Gromov2020metrics}. In Section \ref{sec:fill-vs-Ur} we show that such estimates imply Theorem \ref{thm:urysohn}. Then, we complete the proof of Theorem \ref{theo:main} in Section \ref{section:homotopy.type}. Finally, in Section \ref{sec:mapping} we prove Theorem \ref{theo:mapping}.

	\subsection*{Acknowledgements}
	O.C. was supported by a Sloan Fellowship, a Terman Fellowship, and NSF grant DMS-2016403. C.L. was supported by NSF
	grant DMS-2005287. Y.L. was supported by a NSERC Discovery grant and NSERC Accelerator Award. We are grateful to Hannah Alpert, Larry Guth, Ciprian Manolescu, Kasra Rafi, and Boyu Zhang for various discussions related to this article. We would like to thank
	Georg Frenck for suggesting the statement 
	of Theorem \ref{theo:mapping} to us as well as an anonymous referee for several helpful suggestions.

	\section{Filling estimates}\label{sec:filling}
	
	In \cite{ChodoshLi2020generalized,Gromov2020metrics} it was shown that a closed aspherical $n$-manifold does not admit positive scalar curvature for $n=4,5$ by combining a linking argument with a filling radius inequality in the presence of positive scalar curvature. In this section we observe that this filling radius inequality carries over to the setting considered here. 
	
	We begin by summarizing the results contained in \cite{ChodoshLi2020generalized} that will be needed in this paper. 
	\begin{theorem}\label{thm:statement-aspherical} Consider $(N^n,g)$ a closed Riemannian $n$-manifold with scalar curvature $R\geq 1$. Fix a Riemannian cover $(\hat N,\hat g)$.
	    \begin{enumerate}
	        \item Suppose that $n=4$. There is a universal constant $L_0>0$ with the following property. Consider a closed embedded $2$-dimensional submanifold  $\hat \Sigma_2 \subset \hat N$ with $[\hat\Sigma_2] = 0 \in H_2(\hat N;\ZZ)$. Then there is a $3$-chain $\hat\Sigma_3'\subset B_{L_0}(\hat\Sigma_2)$ and a closed embedded $2$-dimensional submanifold $\hat\Sigma_2'$ with
	        \[\partial\hat\Sigma_3'=\hat\Sigma_2 - \hat\Sigma_2'\]
	        as chains, such that for every connected component $S$
	        of $\hat\Sigma_2'$
	        the extrinsic diameter of $S$ satisfies %$\diam(\hat\Sigma_2') \leq L_0$. 
	        $\diam(S) \leq L_0$.
	        \item Suppose that $n=5$. There is a universal constant $L_0>0$ with the following property. Consider a closed embedded $3$-dimensional submanifold  $\hat\Sigma_3 \subset \hat N$ with $[\hat\Sigma_3] = 0 \in H_3(\hat N;\ZZ)$. Then there is a $4$-chain $\hat \Sigma_4'\subset B_{L_0}(\hat\Sigma_3)$ and a closed embedded $3$-dimensional submanifold $\hat\Sigma_3'$ with
	        \[\partial\Sigma_4' = \Sigma_3 - \Sigma_3'\]
	        as chains, as well as $3$-chains $\hat U_1,\dots,\hat U_m$ with $\diam(\hat U_j)\leq L_0$ and $2$-cycles $\{\hat \Gamma_j^l : j =1,\dots,m, l=1,\dots,k(j)\}$ with $\diam(\hat \Gamma_j^l) \leq L_0$ and so that
	        \[
	        \hat\Sigma_3 = \sum_{j=1}^m \hat U_j
	        \]
	        and
	        \[
	        \partial \hat U_j = \sum_{l=1}^{k(j)} \hat\Gamma_j^l,
	        \]
	        for each $j=1,\dots,m$, where both equalities hold as chains (not just in homology).  Finally, there is an integer $q$ and a function 
	        \[
	        u : \{(j,l) : j =1,\dots,m, l=1,\dots,k(j)\} \to \{1,\dots,q\}
	        \]
	        so that for $r \in \{1,\dots,q\}$, we have
	        \[
	        \diam\left(\bigcup_{(j,l) \in u^{-1}(r)} \hat\Gamma_j^l \right) \leq L_0
	        \]
	        and moreover we have
	        \[
	        \sum_{(j,l) \in u^{-1}(r)} \hat\Gamma_j^l = 0
	        \]
	        as $2$-chains, for $r \in \{1,\dots,q\}$. 
	    \end{enumerate}
	\end{theorem}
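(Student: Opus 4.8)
The plan is to extract Theorem~\ref{thm:statement-aspherical} directly from the analysis of \cite{ChodoshLi2020generalized} (see also \cite{Gromov2020metrics}). The point is that, although those filling estimates are stated there for a closed \emph{aspherical} $N$, their proofs never invoke asphericity: they use only the bound $R\ge 1$ and the choice of a Riemannian covering $(\hat N,\hat g)$ (complete, with $R\ge 1$), and every construction takes place on a fixed ball $B_{L_0}(\hat\Sigma)\subset\hat N$, hence on a geometrically bounded piece. So I would restate the relevant propositions of \cite{ChodoshLi2020generalized} for an arbitrary cover and check that the arguments, which rest on Gromov's warped $\mu$-bubble technique (building on Schoen--Yau and Gromov--Lawson), carry over verbatim.

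For $n=4$: given a null-homologous closed embedded surface $\hat\Sigma_2$, fix a $3$-chain bounding it and set up a Riemannian band $V\subset\hat N$ having $\hat\Sigma_2$ as one end, whose ``width'' measures how far one must travel to fill $\hat\Sigma_2$. On $V$ one minimizes the weighted functional $\Omega\mapsto\mathcal{H}^3(\partial^*\Omega\cap V)-\int_\Omega h\,dV$ for a smooth $h$ blowing up to $\pm\infty$ at the two ends; a minimizer $\hat\Sigma_3'$ exists and is a smooth stable $\mu$-bubble by the regularity theory valid in dimension $\le 7$. Two facts then enter. First, the band-width inequality forced by $R\ge 1$ (via the second-variation estimate) shows that if the filling required a band of width $>L_0$ the first Dirichlet eigenvalue would turn negative; hence $\hat\Sigma_3'$ stays in $B_{L_0}(\hat\Sigma_2)$. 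Second, combining the $\mu$-bubble stability inequality with the Gauss equation and $R_N\ge 1$ produces a conformal change of the induced metric on the $3$-manifold $\hat\Sigma_3'$ with positive scalar curvature. Three-dimensional PSC theory (Gromov--Lawson sweepouts, or the Urysohn $1$-width bounds of \cite{GromovLawson,LiokumovichMaximo2020waist}) then cuts $\hat\Sigma_3'$ along small surfaces into pieces of universally bounded diameter; the surfaces produced by this last slicing constitute $\hat\Sigma_2'$ (each component of diameter $\le L_0$), and the region between $\hat\Sigma_2$ and $\hat\Sigma_3'$ together with the slicing caps assembles a $3$-chain realizing $\partial\hat\Sigma_3'=\hat\Sigma_2-\hat\Sigma_2'$.

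For $n=5$ the descent gains one stage: starting from a null-homologous $3$-cycle $\hat\Sigma_3$, a $4$-dimensional $\mu$-bubble $\hat\Sigma_4'$ near $\hat\Sigma_3$ (again trapped in $B_{L_0}$ by the band-width estimate) carries a conformal metric of positive scalar curvature in dimension $4$; running the $n=4$ argument inside $\hat\Sigma_4'$ reduces to dimension $3$, and the final three-dimensional decomposition produces the small-diameter $3$-chains $\hat U_j$, their boundary $2$-cycles $\hat\Gamma_j^l$, and the grouping function $u$. Here $u$ records which of the small $2$-cycles arise together on a common small region of the $3$-dimensional slicing, which is exactly what forces each grouped sum $\sum_{(j,l)\in u^{-1}(r)}\hat\Gamma_j^l$ to vanish as a chain.

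The step I expect to be the main obstacle is keeping all these fillings inside the fixed neighborhood $B_{L_0}(\hat\Sigma)$: this is the only place the scalar curvature hypothesis enters quantitatively, through the sharp band-width bound coming from $\mu$-bubble stability, and it is what separates a coarse filling estimate from a bare homological one. The second delicate point, specific to $n=5$, is the bookkeeping required to upgrade the homological decomposition to the stated chain-level identities with the grouping $u$. Everything else — existence and regularity of $\mu$-bubbles in low dimensions, the conformal change to positive scalar curvature, and the three-dimensional input — is by now standard and is carried out in detail in \cite{ChodoshLi2020generalized,Gromov2020metrics}.
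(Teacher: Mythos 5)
Your dimensional bookkeeping is off and this creates a genuine gap, most visibly in the $n=4$ case. The paper's argument has a two-stage descent: first solve Plateau's problem in the ambient $\hat N^n$ to produce an area-minimizing $(n-1)$-chain $\hat\Sigma_{n-1}$ with $\partial\hat\Sigma_{n-1} = \hat\Sigma_{n-2}$, and only then run the warped $\mu$-bubble machinery \emph{inside} $\hat\Sigma_{n-1}$, producing a stable $\mu$-bubble of dimension $n-2$. For $n=4$ this yields a $2$-dimensional stable $\mu$-bubble $\hat\Sigma_2'\subset\hat\Sigma_3$, whose components have intrinsic diameter $\leq L_0$ directly by \cite[Lemma 16]{ChodoshLi2020generalized}; there is no $3$-manifold with conformally PSC metric and no three-dimensional slicing in the $n=4$ case. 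Your proposal instead constructs a ``$3$-manifold $\hat\Sigma_3'$'' as a $\mu$-bubble, conformally changes it to PSC, and slices it — that is the $n=5$ mechanism, not the $n=4$ one, and it cannot even be set up as you describe: $\hat\Sigma_2$ is codimension $2$ in $\hat N^4$, so it cannot serve as one end of a Riemannian band $V\subset\hat N$, and the functional $\Omega\mapsto \mathcal{H}^3(\partial^*\Omega\cap V)-\int_\Omega h$ requires $V$ to be $4$-dimensional. The same issue reappears in your $n=5$ sketch, where you speak of ``a $4$-dimensional $\mu$-bubble $\hat\Sigma_4'$ near $\hat\Sigma_3$'': the $4$-chain trapping $\hat\Sigma_3$ is the \emph{Plateau} solution, not a $\mu$-bubble in $\hat N^5$ (again, $\hat\Sigma_3$ has the wrong codimension to bound a band in $\hat N^5$).

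There is a second, related omission: you say ``fix a $3$-chain bounding $\hat\Sigma_2$,'' but the chain must be taken \emph{area-minimizing}. The Plateau minimizer carries the stability/second-variation inequality that, combined with the Gauss equation and $R_N\geq 1$, sets up the PSC-type structure needed for the subsequent $\mu$-bubble to satisfy a band-width bound; an arbitrary filling chain has no such control. Once both of these points are corrected — Plateau minimizer, then $(n-2)$-dimensional $\mu$-bubble inside it, then (only for $n=5$) the slice-and-dice decomposition of $\hat\Sigma_3'$ giving the $\hat U_j$, $\hat\Gamma_j^l$ and the grouping $u$ — your remaining observations (that asphericity is never used, that everything happens in a fixed $B_{L_0}$, that the $\sum_{(j,l)\in u^{-1}(r)}\hat\Gamma_j^l=0$ bookkeeping comes from the cutting structure) track the paper's proof accurately.
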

	\begin{proof}
	    When $n=4$, one can solve Plateau's problem to find $\hat\Sigma_3$ minimizing area with $\partial\hat\Sigma_3 = \hat\Sigma_2$. Applying the ``$\mu$-bubble technique'' (cf.\ \cite[\S 3]{ChodoshLi2020generalized}), we can find $\hat\Sigma_2' \subset \hat \Sigma_3$ with $d_{\hat\Sigma_3}(\hat\Sigma_2',\hat\Sigma_2) \leq L_0$ and so that $\hat\Sigma_2' \subset \hat\Sigma_3$ is a ``stable $\mu$-bubble'' in the sense of \cite[Lemma 14]{ChodoshLi2020generalized}. By \cite[Lemma 16]{ChodoshLi2020generalized}, the intrinsic diameter of each component is $\leq L_0$ (taking $L_0$ larger if necessary). This proves the assertion (since extrinsic distances are bounded by the intrinsic distances).
	    
	    Similarly, when $n=5$, we can solve Plateau's problem to find $\hat\Sigma_4$ minimizing area with $\partial\hat \Sigma_4= \hat\Sigma_3$. As before, we can find a ``stable $\mu$-bubble'' $\hat \Sigma_3'$ with $d_{\hat\Sigma_4}(\hat\Sigma_3',\hat\Sigma_3) \leq L_0$. Finally, the construction of the $\hat U_j,\hat \Gamma_j^k$ follows from the ``slice-and-dice'' procedure from \cite[\S 6.3-6.4]{ChodoshLi2020generalized}. 
	    
	    Note that the last conclusion (i.e., that $ \sum_{(j,l) \in u^{-1}(r)} \hat\Gamma_j^l = 0$) was stated slightly differently in \cite{ChodoshLi2020generalized}. To be precise, it was proven that the cycles $\sum_{(j,l) \in u^{-1}(r)} \hat\Gamma_j^l$ are disjoint for distinct $r$ (cf.\ \cite[\S 6.4]{ChodoshLi2020generalized}). Now, by using $\sum_{r=1}^q \sum_{(j,l) \in u^{-1}(r)} \hat\Gamma_j^l = \partial(\sum_{j=1}^m U_j) = 0$, we find that each term in the sum must vanish. 
	\end{proof}

    \begin{figure}[h!]
            \centering
            \includegraphics[width=\textwidth]{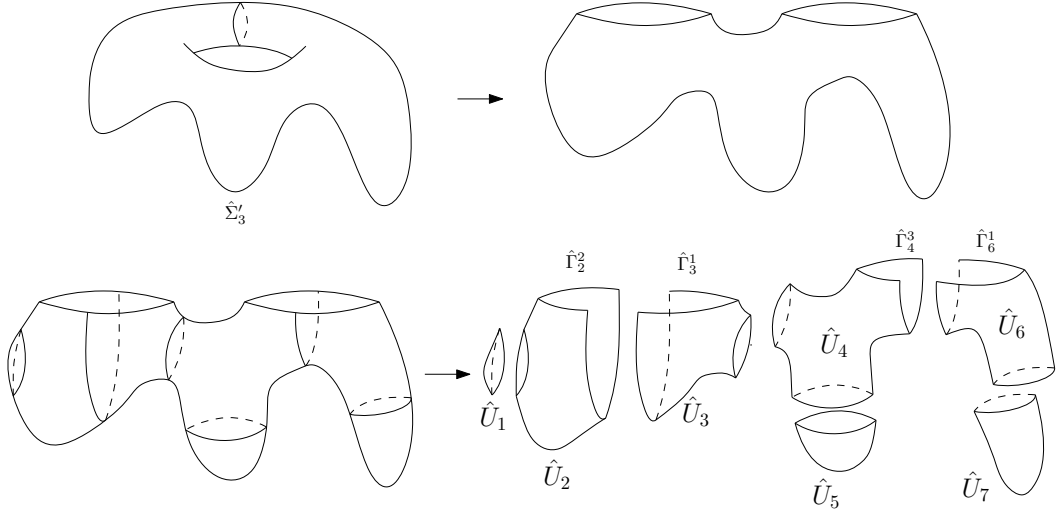}
            \caption{Cutting the 3-cycle $\hat \Sigma_3'$ into small pieces.}
            \label{fig:slice_dice}
    \end{figure}

    \begin{example}\label{example:slice_dice}
        We illustrate the ``slice-and-dice'' procedure and its relevance to the statements of Theorem \ref{thm:statement-aspherical} with figure \ref{fig:slice_dice}, where $\hat\Sigma_3'$ is diffeomorphic to $S^2\times S^1$. We first cut (slice) $\hat\Sigma_3'$ by an embedded $S^2$ and view the result as a $3$-manifold with boundary, which we further cut (dice) into seven $3$-chains $\hat U_1,\cdots, \hat U_7$, such that each $\hat U_j$ satisfies $\diam \hat U_j\le L_0$ (of course, the number of chains may vary in different examples). We label the boundary components of $\hat U_j$, from left to right in the figure, by $\hat \Gamma_j^l$, $l=1,\cdots, k(j)$. Note that in this case, there are $4$ such boundary components that are non-smooth, namely $\hat \Gamma_2^2, \hat \Gamma_3^1, \hat\Gamma_4^3, \hat \Gamma_6^1$. The function $u$ groups different $\hat\Gamma_j^l$ that glue together into a $2$-cycle. For example, we have
        \[u(2,2)=u(3,1)=u(4,3)=u(6,1),\]
        and
        \[u(1,1)=u(2,1), \quad u(3,2)=u(4,1),\quad u(4,2)=u(5,1),\quad u(6,2)=u(7,1).\]
        Moreover, the value of $u$ on different groups of $\hat\Gamma_j^l$ are different. (e.g. $u(2,2)\ne u(1,1)$.) Note here that $\sum_{(j,l)\in u^{-1}(r)} \hat \Gamma_j^l=0$ for each $r$.

    \end{example}
    
    The following proposition will be used to replace \cite[Proposition 10]{ChodoshLi2020generalized} in the more general setting considered here. 
	
	\begin{proposition}\label{prop:FF-iso-cover}
		Consider $\pi :(\hat N,\hat g) \to (N,g)$ a regular\footnote{Recall that a cover is regular if the group of deck transformations acts transitively on the fibers. In particular, the universal cover is a regular cover.} Riemannian covering map of $n$-dimensional manifolds, with $(N,g)$ compact. Assume that $H_\ell(\hat N,\mathbb{Z}) = 0$. Then for $r>0$ there is $R=R(r) < \infty$ so that for any $x \in \hat N$, $H_\ell(B_r(x),\mathbb{Z})\to H_\ell(B_R(x),\mathbb{Z})$ is the zero map.
	\end{proposition}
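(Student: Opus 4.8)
The plan is to replace the metric balls by finite subcomplexes of a $\Gamma$-equivariant triangulation, and then to upgrade the hypothesis $H_\ell(\hat N;\ZZ)=0$ to a \emph{uniform} filling statement using that singular homology commutes with nested unions, together with compactness of $N$.

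First I would fix a smooth triangulation $T$ of $N$, let $\rho_0<\infty$ bound the diameters of its (finitely many) simplices, and pull $T$ back through $\pi$ to a triangulation $\hat T$ of $\hat N$ on which the deck group $\Gamma$ acts simplicially. Because the cover is regular one has $\hat N/\Gamma = N$, so $\hat T$ has only finitely many $\Gamma$-orbits of simplices, each of diameter $\le\rho_0$. For a closed simplex $\sigma$ of $\hat T$ and $k\in\NN$ let $L_k(\sigma)$ denote the union of all closed simplices of $\hat T$ that meet the closed $k\rho_0$-neighbourhood of $\sigma$; this is a finite subcomplex, and $\gamma\cdot L_k(\sigma)=L_k(\gamma\sigma)$ for $\gamma\in\Gamma$. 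Elementary estimates --- every simplex of $L_k(\sigma)$ lies in $\overline{B_{(k+1)\rho_0}(\sigma)}$, simplices have volume bounded below, and $\hat N$ has bounded geometry since $N$ is compact --- yield $\#\{\text{simplices of }L_k(\sigma)\}\le C(k)$ with $C(k)$ independent of $\sigma$; and one checks $L_k(\sigma)\subseteq L_{k+1}(\sigma)$, $\bigcup_{k}L_k(\sigma)=\hat N$, and, for every $x\in\sigma$,
\[
B_r(x)\ \subseteq\ L_{k_0}(\sigma)\ \subseteq\ L_k(\sigma)\ \subseteq\ B_{(k+2)\rho_0}(x)\qquad\text{for }k\ge k_0:=\lceil r/\rho_0\rceil+1 .
\]

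The heart of the matter is the following. Fix a simplex $\sigma$. Since singular homology commutes with the nested union $\hat N=\bigcup_{k}L_k(\sigma)$ and $H_\ell(\hat N;\ZZ)=0$, every class of $H_\ell(L_{k_0}(\sigma);\ZZ)$ is killed by the inclusion into $H_\ell(L_k(\sigma);\ZZ)$ for $k$ large enough, a priori depending on the class. The key point --- and this is where compactness of $N$ is used --- is that $H_\ell(L_{k_0}(\sigma);\ZZ)$ is \emph{finitely generated}, by at most $C(k_0)$ elements, being the homology of a finite complex with at most $C(k_0)$ simplices; choosing $k$ large enough to kill a generating set gives $k_1(\sigma)<\infty$ such that $H_\ell(L_{k_0}(\sigma);\ZZ)\to H_\ell(L_{k_1(\sigma)}(\sigma);\ZZ)$ is the zero map. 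By $\Gamma$-equivariance the pair $\big(L_{k_0}(\sigma),L_{k_1(\sigma)}(\sigma)\big)$ depends, up to simplicial isomorphism, only on the $\Gamma$-orbit of $\sigma$; since there are finitely many orbits, $k_1:=\max_\sigma k_1(\sigma)<\infty$ works for all $\sigma$ simultaneously. Now set $R:=(k_1+2)\rho_0$: given any $x\in\hat N$, pick a closed simplex $\sigma\ni x$; the displayed containments give $B_r(x)\subseteq L_{k_0}(\sigma)\subseteq L_{k_1}(\sigma)\subseteq B_R(x)$, and since $H_\ell(L_{k_0}(\sigma);\ZZ)\to H_\ell(L_{k_1}(\sigma);\ZZ)$ vanishes, so does $H_\ell(B_r(x);\ZZ)\to H_\ell(B_R(x);\ZZ)$.

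I expect the bulk of the work to be the (routine but slightly tedious) bookkeeping of the first step: constructing the equivariant triangulation, and verifying the bounded-combinatorics estimate together with the containments relating metric balls $B_s(x)$ to the subcomplexes $L_k(\sigma)$. Conceptually the two essential inputs beyond $H_\ell(\hat N;\ZZ)=0$ are the finite generation of $H_\ell(L_{k_0}(\sigma);\ZZ)$, which relies on $N$ being compact, and the reduction to finitely many $\Gamma$-orbits, which relies on the cover being regular --- precisely as the footnote to the statement hints.
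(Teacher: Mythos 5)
Your argument is correct, and the conceptual core --- the homology of a uniformly bounded compact piece of $\hat N$ is finitely generated, each generator is killed in $\hat N$ since $H_\ell(\hat N;\ZZ)=0$ (hence in a bounded enlargement), and the bound is made uniform over $\hat N$ using regularity and compactness of $N$ --- is exactly the paper's. The packaging is noticeably different, though. The paper avoids building a $\Gamma$-equivariant triangulation and the directed-colimit step entirely: it fixes a base point $x_0$, picks $r_1\in[r,2r]$ with $\overline{B_{r_1}(x_0)}$ a compact manifold-with-boundary, invokes Hatcher's Corollaries A.8--A.9 for finite generation of $H_\ell(\overline{B_{r_1}(x_0)};\ZZ)$, and then observes that each generator bounds a singular $(\ell+1)$-chain $\beta_i$ in $\hat N$, which is automatically compact and hence lies in some fixed $B_{R_1}(x_0)$; uniformity over $x\in\hat N$ then follows simply by applying a deck transformation to bring $x$ within $\diam N$ of $x_0$. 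Your route reaches the same place via the exhaustion $\hat N=\bigcup_k L_k(\sigma)$ and the finitely-many-orbits observation, which is sound but carries some avoidable overhead --- in particular the bounded-combinatorics estimate $\#L_k(\sigma)\le C(k)$ is not needed: it suffices that each $L_{k_0}(\sigma)$ is a finite complex (so its homology is finitely generated) and that there are finitely many $\Gamma$-orbits of simplices. Both proofs use regularity and compactness at the same moment, to guarantee cocompactness of the $\Gamma$-action.
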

	\begin{proof}
		We first fix $x=x_0$. For any $r>0$ there is $r_1\in [r,2r]$ with $\overline{B_{r_1}(x_0)}$ a compact manifold (with boundary). By Corollaries A.8 and A.9 in \cite{Hatcher2002AlgebraicTopology}, the homology groups of $\overline{B_{r_1}(x_0)}$ are finitely generated. Assume that $\alpha_1,\dots,\alpha_J$ generates $H_\ell(\overline{B_{r_1}(x_0)},\mathbb{Z})$. By assumption, each $\alpha_i = \partial\beta_i$ for some $(\ell+1)$-chains $\beta_1,\dots,\beta_J$. Choose $R_1=R_1(r)$ so that $\beta_i \in B_{R_1}(x_0)$ for $i=1,\dots,J$. Then, we see that $H_\ell(\overline{B_{r_1}(x_0)},\mathbb{Z}) \to H_\ell(B_{R_1}(x_0),\mathbb{Z})$ is the zero map, so in particular 
		\[
		H_\ell(B_r(x_0),\mathbb{Z}) \to H_\ell(B_{R_1}(x_0),\mathbb{Z})
		\]
		is the zero map. 
		
		Now, for any $x \in \hat N$, we can assume (using a deck transformation) that $d(x,x_0)\leq \diam N$. Thus, 
		\[
		B_r(x) \subset B_{r+\diam N}(x_0)
		\]
		and
		\[
		B_{R_1(r+\diam N)}(x_0) \subset B_{R_1(r+\diam N)+\diam N}(x)
		\]
		Thus, we find that the assertion holds for $R(r) = R_1(r+\diam N) + \diam N$. 
	\end{proof}
	
%	Now, we observe that the techniques used in \cite{ChodoshLi2020generalized,Gromov2020metrics} yields the following filling radius estimate. 
    Putting these facts together, we thus obtain the following generalization of the filling estimate obtained in \cite{ChodoshLi2020generalized,Gromov2020metrics}. 
	
	\begin{corollary}\label{coro:filling-PSC}
		Suppose that for $n\in\{4,5\}$, $(N^n,g)$ is a closed Riemannian $n$-manifold with positive scalar curvature
		and $\pi_2(N) = \dots = \pi_{n-2}(N) = 0$. Then there is $L=L(N,g)>0$ with the following property.
		Consider $\Sigma_{n-2}$ an closed embedded $(n-2)$-submanifold in $\tilde N$ the universal cover. Then $\Sigma_{n-2}$ is nullhomologous in $B_L(\Sigma_{n-2})$. 
	\end{corollary}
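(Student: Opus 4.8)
\emph{Proof proposal.} The plan is to feed a given $(n-2)$-cycle in $\tilde N$ through Theorem~\ref{thm:statement-aspherical} (applied with $\hat N=\tilde N$, which is a regular cover) to get a ``filling'' by pieces of controlled diameter, and then to cap those pieces off \emph{locally}, using the uniform homology-filling estimate of Proposition~\ref{prop:FF-iso-cover}. First I would verify the hypotheses hold on $\tilde N$: it is simply connected and $\pi_k(\tilde N)\cong\pi_k(N)=0$ for $2\le k\le n-2$, so by the Hurewicz theorem $H_\ell(\tilde N;\ZZ)=0$ for $1\le\ell\le n-2$. In particular $H_{n-2}(\tilde N)=0$, so any closed embedded $(n-2)$-submanifold $\Sigma_{n-2}\subset\tilde N$ is nullhomologous in $\tilde N$ and Theorem~\ref{thm:statement-aspherical} applies. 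After rescaling $g$ so that $R\ge 1$ (here is where the eventual $L$ acquires its dependence on $(N,g)$), write $L_0$ for the resulting constant, and for $\ell\in\{2,\dots,n-2\}$ let $R_\ell(\cdot)$ be the function from Proposition~\ref{prop:FF-iso-cover} (legitimate since $H_\ell(\tilde N)=0$, $\tilde N\to N$ is regular, $N$ compact).

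For $n=4$: Theorem~\ref{thm:statement-aspherical}(1) gives a $3$-chain $\hat\Sigma_3'\subset B_{L_0}(\Sigma_2)$ with $\partial\hat\Sigma_3'=\Sigma_2-\hat\Sigma_2'$, where $\hat\Sigma_2'=\Sigma_2-\partial\hat\Sigma_3'$ is supported in $\overline{B_{L_0}(\Sigma_2)}$ and each of its finitely many components $S$ has $\diam S\le L_0$. Fixing $p_S\in S$ we have $S\subset B_{2L_0}(p_S)$, so Proposition~\ref{prop:FF-iso-cover} with $\ell=2$ yields a $3$-chain $V_S\subset B_{R_2(2L_0)}(p_S)\subset B_{R_2(2L_0)+L_0}(\Sigma_2)$ with $\partial V_S=S$. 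Then $\hat\Sigma_3'+\sum_S V_S$ has boundary $\Sigma_2$ and is supported in $B_L(\Sigma_2)$ with $L:=R_2(2L_0)+L_0$, as desired.

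For $n=5$: Theorem~\ref{thm:statement-aspherical}(2) gives $\hat\Sigma_4'\subset B_{L_0}(\Sigma_3)$ with $\partial\hat\Sigma_4'=\Sigma_3-\hat\Sigma_3'$, together with $3$-chains $\hat U_j$ and $2$-cycles $\hat\Gamma_j^l$, all supported in $\overline{B_{L_0}(\Sigma_3)}$ and of diameter $\le L_0$, with $\hat\Sigma_3'=\sum_j\hat U_j$, $\partial\hat U_j=\sum_l\hat\Gamma_j^l$, and a grouping $u$ satisfying $\diam\big(\bigcup_{(j,l)\in u^{-1}(r)}\hat\Gamma_j^l\big)\le L_0$ and $\sum_{(j,l)\in u^{-1}(r)}\hat\Gamma_j^l=0$ as $2$-chains. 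Since $\Sigma_3=\hat\Sigma_3'+\partial\hat\Sigma_4'$, it suffices to fill $\sum_j\hat U_j$. For each $(j,l)$, Proposition~\ref{prop:FF-iso-cover} with $\ell=2$ produces a $3$-chain $W_j^l$ of diameter $\le R_2(2L_0)$ meeting $\hat\Gamma_j^l$ with $\partial W_j^l=\hat\Gamma_j^l$; then $\hat U_j-\sum_l W_j^l$ is a $3$-cycle of diameter $\le C:=L_0+2R_2(2L_0)$, so by Proposition~\ref{prop:FF-iso-cover} with $\ell=3$ (using $H_3(\tilde N)=0$) it equals $\partial V_j$ with $V_j$ of diameter $\le R_3(C)$ meeting $\hat U_j$. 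Summing, $\sum_j\hat U_j=\partial\big(\sum_j V_j\big)+\sum_r\big(\sum_{(j,l)\in u^{-1}(r)}W_j^l\big)$, and each inner sum is now a $3$-cycle --- its boundary is $\sum_{(j,l)\in u^{-1}(r)}\hat\Gamma_j^l=0$ --- of diameter $\le C$, hence equals $\partial T_r$ with $T_r$ of diameter $\le R_3(C)$ meeting $\overline{B_{L_0}(\Sigma_3)}$. Thus $\Sigma_3=\partial\big(\hat\Sigma_4'+\sum_j V_j+\sum_r T_r\big)$; since every piece meets $\overline{B_{L_0}(\Sigma_3)}$ and has diameter bounded in terms of $L_0,R_2,R_3$ alone, this $4$-chain lies in $B_L(\Sigma_3)$ for a suitable $L=L(N,g)$.

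The step I expect to be the main obstacle is this last one in the $n=5$ case: one must check that $L$ does not degrade as the number $m$ of dice $\hat U_j$ (or groups $q$) grows with $\Sigma_3$. The mechanism making this work is exactly the identity $\sum_{(j,l)\in u^{-1}(r)}\hat\Gamma_j^l=0$ supplied by Theorem~\ref{thm:statement-aspherical}(2): it converts the unstructured collection of small caps $W_j^l$ into a family of small $3$-cycles, each of which can be filled inside a ball of radius depending only on $(N,g)$. The remaining ingredients --- finiteness of the number of pieces, the reduction $\Sigma_{n-2}\sim\hat\Sigma'$ through $\hat\Sigma_4'$ (resp.\ $\hat\Sigma_3'$), and the diameter bookkeeping --- are routine.
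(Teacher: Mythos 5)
Your proposal is correct and follows essentially the same route as the paper's own proof: apply Theorem~\ref{thm:statement-aspherical} in $\tilde N$ to obtain the controlled-diameter slice-and-dice decomposition, then use Proposition~\ref{prop:FF-iso-cover} (legitimate since Hurewicz kills $H_2(\tilde N)$ and, for $n=5$, $H_3(\tilde N)$) to fill the small $\hat\Gamma_j^l$ and the resulting $3$-cycles $\hat U_j-\sum_l W_j^l$ and $\sum_{(j,l)\in u^{-1}(r)}W_j^l$ inside balls of radius controlled by $L_0$ and $R(\cdot)$, the last step hinging exactly on the identity $\sum_{(j,l)\in u^{-1}(r)}\hat\Gamma_j^l=0$ just as in the paper. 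The only differences are cosmetic (filling $\hat\Sigma_2'$ component-by-component, and keeping separate functions $R_2,R_3$ where the paper uses a single $R$).
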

	\begin{proof}
	    Observe that universal cover $\tilde N$ has $\pi_1(\tilde N) = \dots = \pi_{n-2}(\tilde N) = 0$. By the Hurewicz theorem, $H_{n-3}(\tilde N,\mathbb{Z}) = H_{n-2}(\tilde N,\mathbb{Z})=0$.
	
	    When $n=4$ the assertion immediately follows from a combination of Theorem \ref{thm:statement-aspherical} with Proposition \ref{prop:FF-iso-cover}. Indeed, Theorem \ref{thm:statement-aspherical} implies that $\Sigma_{2}$ is homologous to $\Sigma_2'$ in $B_{L_0}(\Sigma_2)$ where $\diam(\Sigma_2')\leq L_0$. Proposition \ref{prop:FF-iso-cover} implies that $\Sigma_2'$ can be filled in a $R(L_0)$-neighborhood. Thus, $\Sigma_2$ can be filled in a $(L_0+R(L_0))$-neighborhood.
	    
	    When $n=5$ the proof is more complicated due to the nature of the ``slice-and-dice'' decomposition in Theorem \ref{thm:statement-aspherical}. Fix $\hat \Sigma_3'\subset B_{L_0}(\Sigma_3)$ homologous to $\Sigma_3$ and $\{\hat U_j\}$ and $\{\hat \Gamma_j^l\}$ with the properties described in Theorem \ref{thm:statement-aspherical}. We can now fill $\hat\Sigma_3'$ in a bounded neighborhood following \cite[\S 6.4]{ChodoshLi2020generalized}, which we explain here. Since $\diam(\hat \Gamma_j^l) \leq L_0$, Proposition \ref{prop:FF-iso-cover} implies that each $\hat \Gamma_j^l = \partial \tilde \Gamma_j^l$ for a $3$-chain with $\diam(\tilde\Gamma_j^l) \leq R(L_0)$. Then, because $\diam(\hat U_j) \leq L_0$, 
	    \[
	    \hat U_j - \sum_{l=1}^{k(j)} \tilde \Gamma_j^l
	    \]
	    is a $3$-cycle of diameter $\leq L_0 + 2R(L_0)$. Thus, by Proposition \ref{prop:FF-iso-cover}, there is a $4$-chain $\tilde U_j$ with $\diam(\tilde U_j) \leq R(L_0 + 2R(L_0))$ and
	    \[
	    \partial\tilde U_j = \hat U_j - \sum_{l=1}^{k(j)} \tilde \Gamma_j^l.
	    \]
	    
	    On the other hand, as was proven in Theorem \ref{thm:statement-aspherical}, there is $u:\{(j,l) : j=1,\dots,m,l=1,\dots,k(j)\}\to\{1,\dots,q\}$ so that
	    \[
	    \diam\left( \bigcup_{(j,l) \in u^{-1}(r)} \hat \Gamma_j^l  \right) \leq L_0
	    \]
	    and
	    \[
	    \sum_{(j,l) \in u^{-1}(r)} \hat \Gamma_j^l = 0
	    \]
	    as $2$-chains.

	   % In particular,
	 %   \[
	%    \hat\Sigma_3' - \sum_{j=1}^m \partial \tilde U_j = \sum_{j=1}^m \sum_{l=1}^{k(j)} \tilde \Gamma_j^l = \sum_{r=1}^q \sum_{(j,l) \in u^{-1}(r)} \tilde \Gamma_j^l
	 %   \]

	    %Because the terms $\sum_{(j,l) \in u^{-1}(r)} \tilde \Gamma_j^l$ have pairwise disjoint boundaries and sum to a cycle, we see that 
	    
	    As such, for $r\in\{1,\dots,q\}$, $\sum_{(j,l) \in u^{-1}(r)} \tilde \Gamma_j^l$ is a $3$-cycle (of diameter bounded by $2R(L_0)+L_0$ and thus there is a $4$-chain $\hat \Xi_r$ with $\diam(\Xi_r) \leq R(L_0 + 2R(L_0))$ and 
	    \[
	    \partial \Xi_r = \sum_{(j,l) \in u^{-1}(r)} \tilde \Gamma_j^l. 
	    \]
	    This yields
	    \[
	    \hat\Sigma_3' = \partial\left[\sum_{r=1}^q \Xi_q + \sum_{j=1}^m \tilde U_j \right]
	    \]
	    with
	    \[
	    \sum_{r=1}^q \Xi_q + \sum_{j=1}^m \tilde U_j \subset B_{R(L_0+2R(L_0))}(\hat\Sigma_3').
	    \]
	    Thus, $\Sigma_3$ is null homologous in a $(R(L_0+2R(L_0)) + R(L_0))$-neighborhood. This completes the proof.
\end{proof}
	
	\begin{figure}[h!]
	    \centering
	    \includegraphics[width=\textwidth]{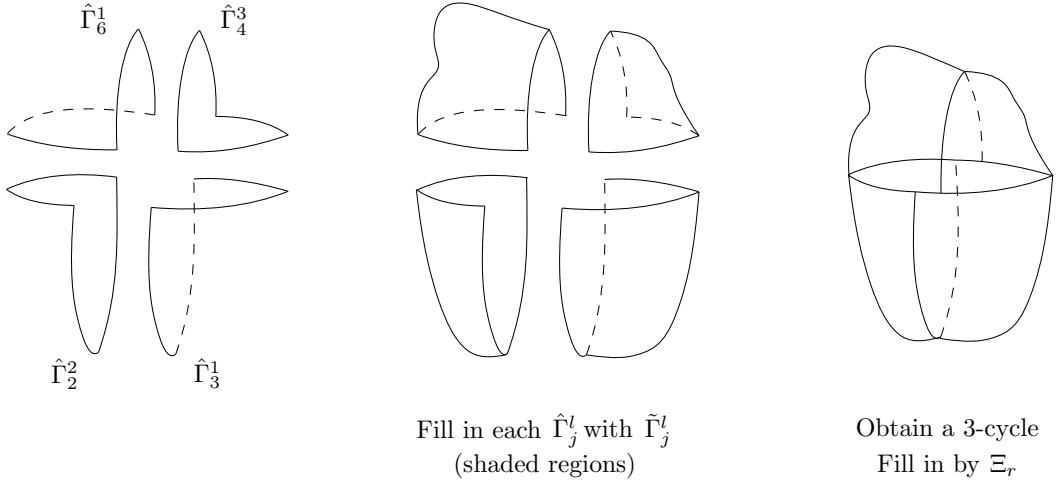}
	    \caption{Fill in $\left\{\hat\Gamma_j^l: u(j,l)=r\right\}$.}
	    \label{fig:fill_in}
	\end{figure}

	\begin{example}
	    Continuing Example \ref{example:slice_dice}, we illustrate in Figure \ref{fig:fill_in} how Corollary \ref{coro:filling-PSC} works for $\hat \Sigma_3'$ in Figure \ref{fig:slice_dice}. Consider all $2$-cycles $\hat \Gamma_j^i$ with $u(j,i)=r$. Fill in $\hat\Gamma_j^l$ with a 3-chain $\tilde \Gamma_j^l$.
	    %(shaded regions in Figure \ref{fig:fill_in}).
	    By construction, the sum of these $\tilde \Gamma_j^l$ forms a $3$-cycle, which can then be filled in by a 4-chain $\Xi_r$. By Proposition \ref{prop:FF-iso-cover} and Corollary \ref{coro:filling-PSC}, the diameter of all these fill-ins are bounded by $R(L_0+2R(L_0))$.
	\end{example}

	\section{Filling versus Urysohn width}\label{sec:fill-vs-Ur}
	The next result is inspired by the work of Hannah Alpert, Alexei Balitsky and Larry Guth \cite{AlBaGu},
	which we learnt about from a talk by Hannah Alpert. The strategy should be compared with \cite[Corollary 10.11]{GromovLawson}. 
	\begin{proposition}\label{prop:fill-to-Ur-w}
		Assume that $(N^n,g)$ has the property that any closed embedded $(n-2)$-submanifold in the universal cover $\Sigma_{n-2}\subset \tilde N$ can be filled in $B_L(\Sigma_{n-2})$. Then the universal cover $(\tilde N,\tilde g)$ satisfies:
		\begin{multline}\label{eq.urysohn}
		\tag{$*$}\text{for any point $p \in \tilde N$, each connected component of} \\\text{a level set of $d(p,\cdot)$ has diameter $\leq 20L$.} 
		\end{multline}
	\end{proposition}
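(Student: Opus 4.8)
The plan is to argue by contradiction via a linking argument, broadly in the spirit of \cite[Corollary 10.11]{GromovLawson}. Suppose that some connected component $C$ of a level set $\{d(p,\cdot)=t\}$ has diameter greater than $20L$; for simplicity I will assume $t$ is a regular value of $d(p,\cdot)$, so that $C$ is a smooth closed embedded hypersurface of $\tilde N$ (the general case following by approximation). Fix $x,y\in C$ with $d(x,y)>20L$. Since $C$ is connected and $d(x,\cdot)|_C$ runs continuously from $0$ at $x$ to more than $10L$ at $y$, we may choose a regular value $\rho$ of $d(x,\cdot)|_C$ near $10L$ and set
\[
\Sigma := \{\, q\in C : d(x,q)=\rho \,\}.
\]
Then $\Sigma$ is a closed embedded $(n-2)$-submanifold of $\tilde N$ which separates $C$ into the \emph{cap} $E := \{\, q\in C : d(x,q)\le\rho \,\}$, with $x\in E$ and $\partial E=\Sigma$, and its complement, which contains $y$. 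By the hypothesis there is an $(n-1)$-chain $W$ with $\partial W=\Sigma$ and $\supp W\subseteq B_L(\Sigma)$. I will use $\ZZ/2$ coefficients throughout, so no orientations are needed.

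The key point is that, because $\tilde N$ is simply connected, for every loop $\gamma$ disjoint from $\Sigma$ the mod-$2$ linking number $\mathrm{lk}(\Sigma,\gamma)\in\ZZ/2$ is well defined, and it equals the transverse intersection number $\#(\gamma\cap F)$ computed with \emph{any} $(n-1)$-chain $F$ satisfying $\partial F=\Sigma$: two such chains differ by an $(n-1)$-cycle, which meets the null-homologous loop $\gamma$ an even number of times. I would next record the inclusion
\[
B_L(\Sigma)\ \subseteq\ \{\, \rho-L\le d(x,\cdot)\le\rho+L \,\}\ \cap\ \{\, t-L\le d(p,\cdot)\le t+L \,\},
\]
and deduce from it that every minimizing geodesic from $p$ to $x$ and from $p$ to $y$ avoids $B_L(\Sigma)$: along such a geodesic $d(p,\cdot)$ grows at unit speed, and on the parameter range where $d(p,\cdot)\in[t-L,t+L]$ the reverse triangle inequality (using $d(x,y)>20L$) keeps $d(x,\cdot)$, respectively $d(y,\cdot)$, outside $[\rho-L,\rho+L]$. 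This is where the numerical slack in the constant $20L$ is consumed.

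Using these ingredients I would assemble a loop $\alpha\subseteq\tilde N\setminus B_L(\Sigma)$ meeting the cap $E$ transversally exactly once. Concretely: begin with a short geodesic segment through $x$, transverse to $C$ — it crosses $E$ precisely at $x$ and stays near $x$, hence away from $B_L(\Sigma)$. Close this segment up using the two minimizing geodesics from $p$ to $x$ and from $p$ to $y$ (which sweep the rest of $\alpha$ down into the region $\{d(p,\cdot)\le t-2L\}$, disjoint from $B_L(\Sigma)$), together with a short arc near $y$ and an arc running through the single component of $\{d(p,\cdot)>t\}$ adjacent to the connected hypersurface $C$, arranged so that the only crossing of $\{d(p,\cdot)=t\}$ picked up by the closing part occurs near $y$, where $d(x,\cdot)>\rho$ and hence lies outside $E$ (and outside $B_L(\Sigma)$). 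Then $\alpha$ misses $\supp W$ and meets $E$ exactly once, so
\[
1=\#(\alpha\cap E)=\mathrm{lk}(\Sigma,\alpha)=\#(\alpha\cap W)=0,
\]
a contradiction. Hence no component $C$ of diameter $>20L$ exists, which is the assertion.

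The step I expect to be the main obstacle is the construction of $\alpha$ — precisely, routing its closing part so that it simultaneously (i) stays out of $B_L(\Sigma)$ and (ii) acquires no further transverse intersection with $E$. The usable features are that $E$ lies inside the single hypersurface $\{d(p,\cdot)=t\}$; that minimizing geodesics to $p$ provably avoid $B_L(\Sigma)$ and reach arbitrarily small values of $d(p,\cdot)$; that there is a unique component of $\{d(p,\cdot)>t\}$ adjacent to $C$; and that the gap $d(x,y)>20L$ is large enough for all the pieces to be joined while clearing $B_L(\Sigma)$, which has diameter at most about $22L$ since $\Sigma\subseteq\partial B_\rho(x)$. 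Making this routing precise, together with the reduction from arbitrary levels to regular ones, is the only genuinely technical part of the argument.
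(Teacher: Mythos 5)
Your high-level strategy — choose a closed embedded $(n-2)$-submanifold, fill it in a bounded neighborhood by hypothesis, and derive a contradiction by producing a loop with odd intersection number against the resulting $(n-1)$-cycle, impossible in the simply connected $\tilde N$ — is the same as the paper's, and most of your distance estimates (the geodesics $\eta_x,\eta_y$ avoiding $B_L(\Sigma)$, the containment $B_L(\Sigma)\subseteq\{\rho-L\le d(x,\cdot)\le\rho+L\}\cap\{t-L\le d(p,\cdot)\le t+L\}$) are correct. However, the step you flag as ``the main obstacle'' is in fact a genuine gap, not a technicality. Your loop $\alpha$ requires an arc in $\{d(p,\cdot)>t\}$ joining a point just outside $x$ to a point just outside $y$, and you need this arc to avoid $B_L(\Sigma)$. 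Nothing in your setup controls the geometry of this arc: you only know $B_L(\Sigma)$ sits in the two annular shells $\{t-L\le d(p,\cdot)\le t+L\}$ and $\{\rho-L\le d(x,\cdot)\le\rho+L\}$, and an arc connecting $x$-side to $y$-side in the outer region $\{d(p,\cdot)>t\}$ has no reason to stay clear of their intersection. Pushing the given level-set curve $\gamma\subset C$ slightly outward makes this concrete: the resulting arc passes within $\varepsilon$ of $\Sigma$, hence inside $B_L(\Sigma)$. In fact, a curve from near $x$ to near $y$ that clears $B_L(\Sigma)$ is essentially the kind of linking datum you are trying to manufacture, so the construction risks circularity.

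The paper circumvents this by a different choice of the $(n-2)$-submanifold and the test chain. It sets $\Sigma_{n-2}=\partial B_{4L+\ell}(x)\cap\partial B_{L+\ell}(\eta_x)$ — the intersection of a geodesic sphere about $x$ with the boundary of a tube about $\eta_x$ — and takes the $(n-1)$-chain $\Sigma_{n-1}'=\partial B_{4L+\ell}(x)\cap\overline{B_{L+\ell}(\eta_x)}$, which is a cap of the geodesic sphere, not a cap of the level set $C$. The test loop is then just (a perturbation of) the triangle $T=\eta_x*\gamma*(-\eta_y)$, where $\gamma$ is the \emph{given} curve in $C$: no new arc needs to be constructed at all. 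Elementary triangle inequalities (Lemmas \ref{lem:dist-sigma-gamma}, \ref{lem:disjoint-sigma3-etay}, \ref{dist:sigma-etay} in the paper) show $\gamma$ and $\eta_y$ are at distance $>L$ from $\Sigma_{n-2}$, $\eta_y\cap\Sigma_{n-1}'=\emptyset$, and $\eta_x$ crosses $\Sigma_{n-1}'$ transversally exactly once, so the filling $\Sigma_{n-1}\subset B_L(\Sigma_{n-2})$ misses $T$ entirely while $T\cdot(\Sigma_{n-1}'-\Sigma_{n-1})=\pm1$. If you want to repair your approach along your own lines, the key insight to import is: put the $(n-1)$-chain on a sphere around $x$ (where the geometry relative to $\eta_x$, $\eta_y$, and $\gamma$ is transparent via the triangle inequality), not on the level set $C$, and use the triangle $T$ itself as the loop rather than trying to build a new loop threading the ``outside'' region.
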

	Note that Corollary \ref{coro:filling-PSC} implies a manifold $(N,g)$ in Theorem \ref{theo:main} satisfies the assumptions of Proposition \ref{prop:fill-to-Ur-w}. By the argument in \cite[Corollary 10.11]{GL:complete}, this shows that the universal cover $(\tilde N,\tilde g)$ has Urysohn 1-width $\le 20L$. In particular, the macroscopic dimension of $\tilde N$ is $1$.
	
	\begin{proof}Let $p \in \tilde{N}$ be a point and consider level sets of the distance function 
		$f(x) = d(p, x)$. 
		
		For the sake of contradiction, suppose that there is a curve $\gamma \subset f^{-1}(t)$ connecting points $x,y$ with $d(x,y) \geq 20L$. Fix a minimizing geodesic $\eta_x$ from $p$ to $x$ (and similarly for $\eta_y$) and consider the triangle $T = \eta_x * \gamma * -\eta_y$. Fix $0<\ell < L$ so that $\partial B_{4L+\ell}(x)$ and $\partial B_{L+\ell}(\eta_x)$ are smooth hypersurfaces intersecting transversely. Set $\Sigma_{n-2} : = \partial B_{4L+\ell}(x) \cap \partial B_{L+\ell}(\eta_x)$. Note that we have not ruled out $\Sigma_{n-2} =\emptyset$; in this case we will take $d(\Sigma_{n-2},\cdot) = \infty$ below.
		%(Note that as a consequence of Lemma \ref{lem:dist-sigma-gamma} below we see that $\Sigma_{n-2}\neq \emptyset$.) 
		
		By construction,
		\[
		d(\Sigma_{n-2},\eta_x) > L.
		\]
		Set $\Sigma_{n-1}' : = \partial B_{4L+\ell}(x) \cap \overline{B_{L+\ell}(\eta_x)}$ and note that $\partial \Sigma_{n-1}' =\Sigma_{n-2}$. Observe that since $\eta_x$ is a minimizing geodesic between $p$ and $x$ and $\Sigma_{n-1}'$ is a subset of $\partial B_{4L+\ell}(x)$, it must hold that $\eta_x$ intersects $\Sigma_{n-1}'$ exactly once and does so orthogonally (and thus transversally). We will return to this observation below. 
		
		\begin{lemma}\label{lem:dist-sigma-gamma}
		$d(\Sigma_{n-2},\gamma) \geq d(\Sigma_{n-1}',\gamma) > L$.
		\end{lemma}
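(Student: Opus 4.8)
The plan is to derive both inequalities in Lemma~\ref{lem:dist-sigma-gamma} from the triangle inequality, exploiting the defining properties of the three objects involved: the hypersurface-with-boundary $\Sigma_{n-1}'$ lies on the metric sphere $\partial B_{4L+\ell}(x)$, it is contained in $\overline{B_{L+\ell}(\eta_x)}$, and $\eta_x$ is a \emph{minimizing} geodesic from $p$ to $x$. The last fact is the one I would be careful to use in its sharp form: for \emph{every} point $a$ lying on $\eta_x$ one has $d(p,a)+d(a,x)=d(p,x)=t$, an equality, not merely $d(p,a)+d(a,x)\ge t$. The role of this is to convert ``being far from $x$ along $\eta_x$'' into ``being close to $p$'', and hence, via $d(p,\cdot)\equiv t$ on $\gamma$, into ``being far from $\gamma$''.

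The first inequality is immediate and I would dispose of it first: since $\partial B_{L+\ell}(\eta_x)\subset \overline{B_{L+\ell}(\eta_x)}$, we have $\Sigma_{n-2}\subset\Sigma_{n-1}'$, and replacing a set by a subset can only increase the distance to $\gamma$, so $d(\Sigma_{n-2},\gamma)\ge d(\Sigma_{n-1}',\gamma)$ (with the convention $d(\emptyset,\cdot)=\infty$, noting $\Sigma_{n-2}\subset\Sigma_{n-1}'$ so this causes no issue). For the substantive inequality $d(\Sigma_{n-1}',\gamma)>L$ I would fix arbitrary points $z\in\Sigma_{n-1}'$ and $w\in\gamma$ and bound $d(z,w)$ from below, uniformly. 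Since $z\in\overline{B_{L+\ell}(\eta_x)}$ and $\eta_x$ is compact, pick $a\in\eta_x$ with $d(a,z)\le L+\ell$. Since $z\in\partial B_{4L+\ell}(x)$ we have $d(x,z)=4L+\ell$, so the triangle inequality gives $d(a,x)\ge d(x,z)-d(z,a)\ge 3L$. The minimizing property of $\eta_x$ then yields $d(p,a)=t-d(a,x)\le t-3L$. Finally $w\in\gamma\subset f^{-1}(t)$ means $d(p,w)=t$, so
\[
t = d(p,w)\le d(p,a)+d(a,z)+d(z,w)\le (t-3L)+(L+\ell)+d(z,w),
\]
whence $d(z,w)\ge 2L-\ell$. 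Since $\ell<L$ this gives $d(z,w)\ge 2L-\ell>L$, and as $z\in\Sigma_{n-1}'$, $w\in\gamma$ were arbitrary we conclude $d(\Sigma_{n-1}',\gamma)\ge 2L-\ell>L$.

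There is no genuine obstacle here; it is a one-screen triangle-inequality argument. The only two points that require attention are exactly those flagged above: (i) invoking minimality of $\eta_x$ to get the equality $d(p,a)+d(a,x)=t$ rather than an inequality, and (ii) bookkeeping the $\ell$-slack, which is why the radii were chosen as $4L+\ell$ and $L+\ell$ with $\ell\in(0,L)$ — this is what makes the final constant $2L-\ell$ strictly exceed $L$. (The orthogonal-transversal intersection of $\eta_x$ with $\Sigma_{n-1}'$ noted just before the lemma is not used in the lemma itself; it is presumably reserved for the subsequent linking/degree contradiction with the filling hypothesis.)
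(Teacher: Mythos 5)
Your proof is correct and follows essentially the same line as the paper's: pick $a\in\eta_x$ within $L+\ell$ of a point of $\Sigma_{n-1}'$, use $d(x,\cdot)=4L+\ell$ on the sphere plus the triangle inequality to get $d(a,x)\ge 3L$, invoke minimality of $\eta_x$ to convert this into $d(p,a)\le t-3L$, and then compare with $d(p,\cdot)\equiv t$ on $\gamma$ to obtain $2L-\ell>L$. The only cosmetic difference is that you bound $d(z,w)$ for arbitrary $z,w$ rather than picking a distance-minimizing $s$, and you dispose of the trivial containment inequality first rather than last; the estimates are identical.
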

		\begin{proof}
		    We first prove that $d(\Sigma_{n-1}',\gamma) > L$. Choose $s \in \Sigma_{n-1}'$ with $d(s,\gamma) = d(\Sigma_{n-1}',\gamma)$. There is $e \in \eta_x$ so that $d(s,e) \leq L+\ell$. We have
			\[
			d(x,e) \geq d(x,s) - d(s,e) \geq 4L +\ell - (L+\ell) = 3L. 
			\]
			Since $\eta_x$ is minimizing (and has length $t$), we have $d(p,e) \leq t-3L$. Thus,
			\[
			d(p,s) \leq d(p,e) + d(e,s) \leq t-3L + L + \ell = t-2L + \ell. 
			\]
			Thus,
			\[
			d(s,\gamma) \geq d(p,\gamma) - d(p,s) \geq t - (t-2L+\ell) = 2L - \ell.
			\]
			This completes the proof of $d(\Sigma_{n-1}',\gamma) > L$. Since $\Sigma_{n-2} \subset \Sigma_{n-1}'$ it clearly holds that $d(\Sigma_{n-2},\gamma) \geq d(\Sigma_{n-1}',\gamma)$. 
			
		\end{proof}
		\begin{lemma}\label{lem:disjoint-sigma3-etay}
			$\Sigma_{n-1}' \cap \eta_y = \emptyset$.
		\end{lemma}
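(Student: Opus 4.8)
The plan is to argue by contradiction using nothing more than the triangle inequality together with the fact that $\eta_y$ is a minimizing geodesic. Suppose that some point $z$ lies in $\Sigma_{n-1}' \cap \eta_y$. Since $\Sigma_{n-1}' \subset \partial B_{4L+\ell}(x)$, the point $z$ lies on the metric sphere of radius $4L+\ell$ about $x$, so $d(x,z) = 4L+\ell$. Recalling that $\gamma \subset f^{-1}(t)$ forces $d(p,x) = d(p,y) = t$, the triangle inequality gives
\[
d(p,z) \geq d(p,x) - d(x,z) = t - 4L - \ell.
\]

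Next I would use that $z \in \eta_y$, where $\eta_y$ is a minimizing geodesic from $p$ to $y$ of length $t$: the point $z$ splits $\eta_y$ into two subsegments, each of which is still minimizing, so $d(p,z) + d(z,y) = t$, and therefore
\[
d(z,y) = t - d(p,z) \leq 4L + \ell.
\]
Combining the two displays with one more application of the triangle inequality,
\[
d(x,y) \leq d(x,z) + d(z,y) \leq (4L+\ell)+(4L+\ell) = 8L + 2\ell < 10L,
\]
where the last inequality uses $\ell < L$. This contradicts the standing assumption $d(x,y) \geq 20L$ under which the whole argument proceeds, so no such $z$ can exist, i.e.\ $\Sigma_{n-1}' \cap \eta_y = \emptyset$.

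I expect no real obstacle here: the only inputs are that $z$ sits on the sphere $\partial B_{4L+\ell}(x)$ and on the minimizing segment $\eta_y$, so the extra restriction to $\overline{B_{L+\ell}(\eta_x)}$ in the definition of $\Sigma_{n-1}'$ is not even needed for this lemma. The estimate is lossy by roughly a factor of two (it only needs $d(x,y) \gtrsim 10L$), which is consistent with the generous constant $20L$ that was deliberately built into the contradiction hypothesis in order to leave room for the companion distance bound in Lemma \ref{lem:dist-sigma-gamma} and for the linking argument that follows.
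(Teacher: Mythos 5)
Your argument is correct and in fact slightly cleaner than the paper's, so this qualifies as a genuinely different (if closely related) route. The paper's proof detours through a point $e \in \eta_x$ with $d(s,e) \leq L+\ell$, exploiting the membership of $\Sigma_{n-1}'$ in $\overline{B_{L+\ell}(\eta_x)}$: it estimates $d(x,e) \leq 5L+2\ell$, deduces $d(p,e) \geq t - 5L - 2\ell$ from $e$ lying on the minimizing geodesic $\eta_x$, and only then bounds $d(p,s)$ from below via $d(p,s) \geq d(p,e) - d(e,s)$, arriving at $d(s,y) \leq 6L + 3\ell$ and a contradiction $20L \leq d(x,y) \leq 10L + 4\ell$. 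You instead apply the reverse triangle inequality directly at $x$: since $d(p,x) = t$ and $d(x,z) = 4L+\ell$, you get $d(p,z) \geq t - 4L - \ell$ in one step, hence $d(z,y) \leq 4L + \ell$, and the sharper contradiction $20L \leq d(x,y) \leq 8L + 2\ell$. Your observation that the constraint $\Sigma_{n-1}' \subset \overline{B_{L+\ell}(\eta_x)}$ is not needed for this particular lemma (only the spherical constraint $\Sigma_{n-1}' \subset \partial B_{4L+\ell}(x)$ matters) is correct and is exactly what makes your version shorter; the tubular constraint around $\eta_x$ is essential elsewhere (notably in Lemma \ref{lem:dist-sigma-gamma} and the final intersection argument), so the paper's more uniform treatment of the three lemmas is a stylistic choice rather than a logical necessity here.
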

		\begin{proof}
			Suppose the contrary. Consider $s \in \Sigma_{n-1}' \cap \eta_y$. Note that $d(s,x) = 4L+\ell$ and there is $e \in \eta_x$ with $d(s,e)\leq L+\ell$. We have
			\[
			d(x,e) \leq d(x,s) + d(e,s)  \leq 5 L + 2\ell. 
			\]
			As such,
			\[
			d(p,e) \geq t-5L - 2\ell,
			\]
			so
			\[
			d(p,s) \geq d(p,e) - d(e,s) \geq t - 5L - 2\ell - L - \ell = t-6L -3\ell.
			\]
			Thus,
			\[
			d(s,y) \leq 6L + 3\ell. 
			\]
			However, this contradicts
			\[
			20 L \leq d(x,y) \leq d(x,s) + d(s,y) \leq 4L + \ell + 6L +3\ell = 10L + 4\ell.  
			\]
			This completes the proof. 
		\end{proof}
		\begin{lemma}\label{dist:sigma-etay}
			$d(\Sigma_{n-2},\eta_y) > L$.  
		\end{lemma}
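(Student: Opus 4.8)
The plan is to argue by contradiction. If $\Sigma_{n-2}=\emptyset$ there is nothing to prove, so assume otherwise; then $\Sigma_{n-2}$ and $\eta_y$ are both compact, so if $d(\Sigma_{n-2},\eta_y)\le L$ we may fix $s\in\Sigma_{n-2}$ and $z\in\eta_y$ realizing this distance, i.e.\ with $d(s,z)\le L$, and derive a contradiction. The only inputs I expect to use here are that $\Sigma_{n-2}\subset\partial B_{4L+\ell}(x)$ (so $d(s,x)=4L+\ell$), that $\eta_x,\eta_y$ are minimizing geodesics issuing from $p$ of common length $t=d(p,x)=d(p,y)$, and that $d(x,y)\ge 20L$. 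In particular I do not expect to need Lemma~\ref{lem:disjoint-sigma3-etay} or the $\eta_x$-neighbourhood description of $\Sigma_{n-2}$ at this step.

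First I would bound $d(z,x)$ from above: since $d(s,x)=4L+\ell$ and $d(s,z)\le L$, the triangle inequality gives $d(z,x)\le 5L+\ell$. Next I would trap $d(p,z)$ between two incompatible bounds by exploiting that $z$ lies on the minimizing geodesic $\eta_y$. On one hand, $d(p,z)\ge d(p,x)-d(x,z)\ge t-5L-\ell$. On the other hand, since $z\in\eta_y$ and $\eta_y$ is minimizing of length $t$, we have $d(p,z)=t-d(z,y)$, while $d(z,y)\ge d(x,y)-d(x,z)\ge 20L-(5L+\ell)=15L-\ell$, so $d(p,z)\le t-15L+\ell$. Combining the two, $t-5L-\ell\le t-15L+\ell$, i.e.\ $\ell\ge 5L$, which contradicts the choice $0<\ell<L$. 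Hence $d(\Sigma_{n-2},\eta_y)>L$.

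I do not anticipate a genuine obstacle here: the argument is a short chase with the triangle inequality together with the identity $d(p,z)+d(z,y)=t$ valid for points on a minimizing geodesic of length $t$ from $p$ to $y$. The only points requiring (routine) care are the compactness remark used to pass from the inequality $d(\Sigma_{n-2},\eta_y)\le L$ to an actual minimizing pair $(s,z)$ — an $\varepsilon$-perturbation argument works equally well — and bookkeeping the $\ell$-error terms so that the final inequality contradicts $\ell<L$ with room to spare.
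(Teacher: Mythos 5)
Your proof is correct, and it is in fact a little cleaner than the argument in the paper. The paper's proof introduces an auxiliary point $e_x \in \eta_x$ with $d(s,e_x) = L+\ell$ (using that $s \in \partial B_{L+\ell}(\eta_x)$), bounds $d(p,e_x)$ from below, and then bounds $d(p,e_y)$ from below via $e_x$; this leads to $20L \le 12L + 4\ell$ and hence $\ell \ge 2L$, contradicting $\ell < L$. You instead observe that the condition $s \in \partial B_{L+\ell}(\eta_x)$ is not needed at all: only $d(s,x) = 4L+\ell$ is used, together with the identity $d(p,z) + d(z,y) = t$ valid for $z$ on the minimizing geodesic $\eta_y$. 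Your direct chain $t - 5L - \ell \le d(p,z) \le t - 15L + \ell$ yields the (stronger) conclusion $\ell \ge 5L$. Both arguments are short triangle-inequality chases, but yours has fewer moving parts and makes clearer exactly which hypotheses on $\Sigma_{n-2}$ the lemma actually depends on — worth noting since the neighboring Lemmas \ref{lem:dist-sigma-gamma} and \ref{lem:disjoint-sigma3-etay} do genuinely use the $\eta_x$-tube constraint.
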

		\begin{proof}
			The proof is similar to the previous argument. Suppose we have $s \in \Sigma_{n-2}$ and $e_y \in\eta_y$ with $d(s,e_y) \leq L$. There is $e_x \in \eta_x$ with $d(s,e_x)=L+\ell$. Note that $d(s,x) = 4L+\ell$. Thus,
			\[
			d(p,e_x) = t - d(x,e_x) \geq t - d(x,s) - d(s,e_x) \geq t-  5L - 2\ell.
			\]
			Thus,
			\[
			d(p,e_y) \geq d(p,e_x) - d(e_x,e_y) \geq d(p,e_x) - d(e_x,s) - d(s,e_y) = t - 7L - 3\ell .
			\]
			This implies that 
			\[
			d(y,e_y) \leq 7L + 3\ell. 
			\]
			However, this contradicts
			\[
			20 L \leq d(x,y) \leq d(x,s) + d(s,e_y) + d(e_y,y) \leq 12L + 4\ell.
			\]
			This completes the proof. 
		\end{proof}

		We can now complete the proof of Proposition \ref{prop:fill-to-Ur-w}. Perturb the triangle $T$ to be a smooth embedded curve $T'$ still intersecting $\Sigma_{n-1}'$ transversely. As long as the perturbation is small, $T'\cap \Sigma_{n-1}'$ will consist of a single point (thanks to Lemmas \ref{lem:dist-sigma-gamma} and \ref{lem:disjoint-sigma3-etay}, along with the observation that $\eta_x$ intersects $\Sigma_{n-1}'$ transversely in exactly one point). Assume first that $\Sigma_{n-2}\neq \emptyset$. By assumption, there is $\Sigma_{n-1} \subset B_L(\Sigma_{n-2})$ with $\partial\Sigma_{n-1} = \Sigma_{n-2}$. Using Lemmas \ref{lem:dist-sigma-gamma} and \ref{dist:sigma-etay} as well as $d(\Sigma_{n-2},\eta_x) = L + \ell$ we find that $\Sigma_{n-1} \cap T' = \emptyset$. As such, $T'$ has nontrivial algebraic intersection with the cycle $\Sigma_{n-1}'-\Sigma_{n-1}$. This is a contradiction since $\tilde N$ is simply connected. 
		
		If $\Sigma_{n-2} = \emptyset$, then the argument is similar but simpler. In this case, we note that $\Sigma_{n-1}'$ is a cycle and combining Lemmas \ref{lem:dist-sigma-gamma} and \ref{lem:disjoint-sigma3-etay} with the fact that  $\Sigma_{n-1}'$ intersects $\eta_x$ transversely exactly once, we see that $\Sigma_{n-1}'$ is a cycle with nontrivial algebraic intersection with $T'$, a contradiction as before. 
	\end{proof}

	\section{Fundamental group and homotopy type}\label{section:homotopy.type}
	
	In this section, we prove Theorem \ref{theo:main}. We first prove (see Corollary \ref{coro:virtually-free} below) that a manifold $(N^n,g)$ whose universal cover satisfies the conclusion of Proposition \ref{prop:fill-to-Ur-w} has virtually free fundamental group. (Recall that a group is virtually free if it processes a free subgroup of finite index.) This fact seems to be well-known among certain experts (in particular, see p.\ 135 in the May 2021 version of Gromov's four lectures on scalar curvature \cite{gromov2019lectures}). We give a proof here, roughly following the strategy used in \cite{RamachandranWolfson2010fillradius}. The argument is based on notion of the number of \textit{ends} of a group.
	
	\begin{definition}
		Given a group $G$, its number of ends, $e(G)$, is defined as the number of topological ends of $\tilde K$, where $\tilde K\to K$ is a regular covering of finite simplicial complexes $K,\tilde K$, and $G$ is the group of deck transformations.
	\end{definition}
	
	It follows from \cite{Epstein1962ends} that a finitely generated group can have $0,1,2$ or infinitely many ends. Our main result here is as follows:
	
	\begin{proposition}\label{prop:group.ends}
		Suppose $(N,g)$ is a closed Riemannian manifold satisfying 
		%    \eqref{eq.urysohn}
		conclusions of Proposition \ref{prop:fill-to-Ur-w}. Then any finitely generated subgroup $G$ of $\pi_1(N)$ cannot have one end.
	\end{proposition}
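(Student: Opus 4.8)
The plan is to argue by contradiction and to manufacture a quasimorphism on $G$ which cannot exist. Assume some finitely generated $G\le\pi_1(N)$ has exactly one end; in particular $G$ is infinite. Fix a finite symmetric generating set $S$ and a basepoint $p\in\tilde N$, and form the usual $G$-equivariant map $\Phi\colon\mathrm{Cay}(G,S)\to\tilde N$ sending the vertex $g$ to $g\cdot p$ and each edge to a geodesic segment of $\tilde N$; let $\mathcal I=\Phi(\mathrm{Cay}(G,S))$, a connected, $G$-invariant, locally finite subcomplex of $\tilde N$. The key observation at this stage is that $\Phi$ is a \emph{proper} map: since $\pi_1(N)$ acts on $\tilde N$ properly discontinuously, for each $R$ only finitely many $g\in G$ satisfy $d(p,g\cdot p)\le R$. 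Hence $\mathcal I$ is unbounded, and since a proper continuous surjection cannot increase the number of ends, $\mathcal I$ — with the metric induced from $\tilde N$ — has at most $e(G)=1$ end, hence exactly one. (Possible distortion of $G$ inside $\pi_1(N)$ does no harm here precisely because only properness of $\Phi$, not a quasi-isometric embedding, is used.)

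Next I would bring in the conclusion $(*)$ of Proposition~\ref{prop:fill-to-Ur-w}. For every $q\in\mathcal I$ the components of $\{d(q,\cdot)=t\}$ in $\tilde N$ have diameter $\le 20L$, hence so do the components of $\{d(q,\cdot)=t\}\cap\mathcal I$, so $\mathcal I$ is macroscopically one-dimensional with a uniform constant. The crucial geometric lemma I would aim for is that the unique end $\xi$ of $\mathcal I$ is \emph{thin}: that $\mathcal I$ lies within bounded Hausdorff distance of a geodesic ray of $\tilde N$ converging to $\xi$, and consequently that $b(x_m)\to-\infty$ whenever $x_m\in\mathcal I$ with $d(q,x_m)\to\infty$, where $b$ is a Busemann function of $\tilde N$ associated to $\xi$. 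Concretely this amounts to showing that the frontier of the unique unbounded component $M_t$ of $\mathcal I\setminus\overline{B_t(q)}$ has diameter bounded independently of $t$. I expect this to be the main obstacle. I would try to prove it by a linking argument in $\tilde N$ in the spirit of the proof of Proposition~\ref{prop:fill-to-Ur-w}: two far-apart pieces of the frontier, joined by an arc through $M_t$ and a return arc through $\overline{B_t(q)}$, would yield a loop meeting a bounded hypersurface and a filling of it an odd number of times, contradicting $\pi_1(\tilde N)=0$; one would combine this with the uniform $20L$-bound on level-set components. This is essentially the argument carried out, in the filling-radius setting, in \cite{RamachandranWolfson2010fillradius}.

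Finally, granting thinness of $\xi$, fix such a ray $r$ and set $b(x)=\lim_{t\to\infty}\big(d(x,r(t))-t\big)$. Because each $g\in G$ is an isometry of $\tilde N$ fixing the end $\xi$, the function $b\circ g-b$ is, up to an error uniformly bounded over the orbit $G\cdot p$, a constant $\tau(g)$, so $\tau(g):=b(g\cdot p)-b(p)$ defines a quasimorphism $\tau\colon G\to\mathbb R$ with $\tau(g^{-1})=-\tau(g)+O(1)$. Pick $g_m\in G$ with $d(p,g_m\cdot p)\to\infty$ (possible since $G$ is infinite and acts properly). Then $g_m\cdot p\to\infty$ in $\mathcal I$, so thinness of $\xi$ gives $\tau(g_m)=b(g_m\cdot p)-b(p)\to-\infty$; but also $d(p,g_m^{-1}\cdot p)=d(g_m\cdot p,p)\to\infty$, so the same reasoning gives $\tau(g_m^{-1})\to-\infty$, which contradicts $\tau(g_m^{-1})=-\tau(g_m)+O(1)\to+\infty$. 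This contradiction shows that no finitely generated subgroup of $\pi_1(N)$ can have one end.
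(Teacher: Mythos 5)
Your proposal is a genuinely different route from the paper's, and it has a real gap at its pivotal step. The paper proceeds by constructing a cover $\tilde K\to K$ (a compact manifold with boundary carrying the generators of $G$) with deck group $G$ and a proper embedding $\tilde i\colon \tilde K\to\tilde N$, producing a geodesic \emph{line} $\tilde\gamma$ in $\tilde K$ by an Arzel\`a--Ascoli argument, and then deriving the contradiction directly: the two tail points $\gamma(t_\pm(R))$ lie in the unique unbounded component of $\tilde K\setminus B_R(\gamma(0))$ (since $e(\tilde K)=e(G)=1$), hence in one component of $\partial B_R(\gamma(0))$, hence within $20L$ of each other in $\tilde N$ by $(*)$, while their $\tilde K$-distance $|t_+(R)-t_-(R)|\to\infty$, contradicting Lemma~\ref{lemm:bounded.K.dist.N.dist}. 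You replace the cover by the orbit complex $\mathcal I=\Phi(\mathrm{Cay}(G,S))\subset\tilde N$ (a reasonable substitute; via \v{S}varc--Milnor with the induced length metric, $\mathcal I$ is quasi-isometric to $G$ and hence one-ended) and replace the geodesic-line contradiction by a Busemann-function quasimorphism. That is a materially different mechanism.

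The gap is exactly where you flag it: the ``thinness'' lemma. What $(*)$ plus the topological argument gives you is that the \emph{frontier} $\partial M_t$ of the unique unbounded component $M_t$ of $\mathcal I\setminus\overline{B_t(q)}$ has $\tilde N$-diameter $\le 20L$, because any two of its points are joined through $M_t$ and so lie in one level-set component. But this does not, by itself, yield that all of $\mathcal I$ (or even the orbit $Gp$) lies within bounded Hausdorff distance of a single geodesic ray of $\tilde N$: you have not controlled the bounded components of $\mathcal I\setminus\overline{B_t(q)}$, which can a priori protrude to radius much larger than $t$, nor have you shown that every $g_mp$ with $d(q,g_mp)\to\infty$ eventually enters $M_t$ rather than sitting in such pockets. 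Without this, the assertions $b(g_mp)\to-\infty$ and $b(g_m^{-1}p)\to-\infty$ are unjustified. Similarly, the estimate $|b(gx)-b(x)-\tau(g)|=O(1)$ uniformly over $x\in Gp$ (which is what makes $\tau$ a quasimorphism and gives $\tau(g^{-1})=-\tau(g)+O(1)$) itself depends on precisely this thinness; with only the frontier bound, $b\circ g-b$ need not be approximately constant on the orbit. Note the paper's argument sidesteps all of this: it never needs a global ray or a Busemann function, only that the \emph{two} specific points $\gamma(t_\pm(R))$ stay $20L$-close while diverging in $\tilde K$. If you want to keep your framework, the cleanest fix is to import the paper's mechanism: construct a geodesic line in $\mathcal I$ (with its intrinsic length metric) and run the same two-tail-point contradiction, rather than trying to prove Hausdorff-proximity of $\mathcal I$ to a ray and manufacture a quasimorphism.
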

	
	We will prove this below, but first we note that it yields the desired statement:
	
	\begin{corollary}\label{coro:virtually-free}
		Suppose $(N,g)$ is a closed Riemannian manifold satisfying %\eqref{eq.urysohn}
		conclusions of Proposition \ref{prop:fill-to-Ur-w}. Then, $\pi_1(N)$ is virtually free.
	\end{corollary}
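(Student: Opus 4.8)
The plan is to combine Proposition~\ref{prop:group.ends} with the standard structure theory of groups having more than one end. Since $N$ is a closed manifold, $\pi_1(N)$ is finitely presented, in particular finitely generated. By Proposition~\ref{prop:group.ends}, $\pi_1(N)$ does not have exactly one end, so Stallings' theorem on ends of groups applies: $\pi_1(N)$ is either finite, or virtually $\ZZ$ (the two-ended case), or has infinitely many ends and therefore splits nontrivially as an amalgamated free product or an HNN extension over a finite subgroup.

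To treat the last and most general case, I would invoke Dunwoody's accessibility theorem for finitely presented groups: $\pi_1(N)$ is the fundamental group of a finite graph of groups all of whose edge groups are finite and all of whose vertex groups are finitely generated with at most one end. Each vertex group embeds as a finitely generated subgroup of $\pi_1(N)$, so by Proposition~\ref{prop:group.ends} it cannot have exactly one end; hence every vertex group is finite. Thus $\pi_1(N)$ is the fundamental group of a finite graph of finite groups, and by the Karrass--Pietrowski--Solitar characterization of virtually free groups (equivalently, by the fact that a finitely generated group acting on a tree with finite stabilizers and finite quotient graph is virtually free), $\pi_1(N)$ is virtually free. The finite and virtually-$\ZZ$ cases are special instances of this.

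The content here is entirely group-theoretic; all of the analytic and geometric input is already packaged into Proposition~\ref{prop:group.ends}, so there is no substantial obstacle at this stage. The one point that requires care is checking that the vertex groups produced by the accessible decomposition genuinely are finitely generated subgroups of $\pi_1(N)$ — this is part of the accessibility package for finitely presented groups — so that Proposition~\ref{prop:group.ends} may legitimately be applied to them; granting that, the corollary follows formally.
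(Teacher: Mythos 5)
Your proof is correct and follows essentially the same route as the paper: both invoke Dunwoody's accessibility theorem for the finitely presented group $\pi_1(N)$, apply Proposition~\ref{prop:group.ends} to the (finitely generated, at most one-ended) vertex groups to force them to be finite, and then conclude via the standard characterization of finitely generated groups that are fundamental groups of finite graphs of finite groups. The only cosmetic difference is the reference for the last step — the paper cites Serre and Scott--Wall, you cite Karrass--Pietrowski--Solitar — and your preliminary detour through Stallings' theorem, which is subsumed by Dunwoody's statement.
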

	\begin{proof}
		We follow the proof of \cite[Theorem 2.5]{RamachandranWolfson2010fillradius}. Indeed, by combining the main result of \cite{Dunwoody} (cf.\ \cite[\S 7]{ScottWall}) with 
		%Corollary \ref{coro:virtually-free}, 
		Proposition \ref{prop:group.ends},
		$\pi_1(N)$ is the fundamental group of a finite graph of groups with finite edge and vertex groups. The assertion now follows from Proposition 11 in Chapter II, Section 2.6 of \cite{Serre} (or e.g., \cite[Theorem 7.3]{ScottWall}). 
	\end{proof}
	
	Moreover, we observe that given these results, we can finish the proof of Theorem \ref{theo:main}.
	\begin{proof}[Proof of Theorem \ref{theo:main}]
		By Corollary \ref{coro:filling-PSC}, Proposition \ref{prop:fill-to-Ur-w}, and Corollary \ref{coro:virtually-free}, $\pi_1(N)$ is virtually free. Let $G\subset \pi_1(N)$ be a finite index subgroup which is a free group. Consider the finite covering $\hat N\xrightarrow{\hat p} N$ such that the image of $\hat p_\#$ is $G$. Then $\pi_1(\hat N)$ is a finitely generated free group. Since $\pi_2(\hat N)=\cdots=\pi_{n-2}(\hat N)=0$, \cite[Section 2 and Section 3]{GadgilSeshadri2009topology} implies that $\hat N$ is homotopy equivalent to $S^n$ or connected sums of $S^{n-1}\times S^1$.
	\end{proof}

	We now give the proof of Proposition \ref{prop:group.ends}:
	\begin{proof}[Proof of Proposition \ref{prop:group.ends}]
		Suppose there is a finitely generated subgroup $G$ of $\pi_1(N)$ with one end. We will show that this leads to a contradiction. 
		
		We divide the proof into several steps. Take a cover $N_0 \xrightarrow{p} N$ such that $p_\#(\pi_1(N_0)) = G$. Because $p_\# : \pi_1(N_0) \to \pi_1(N)$ is injective this ensures that $\pi_1(N_0) = G$. If $G$ is finite then $e(G)=0$, so we can assume $G$ is infinite.
		
		Since $G$ is finitely generated, we can find $K \subset N_0$ a compact submanifold with boundary containing representatives of all of the generators of $G$. Write $i : K\to N_0$ for the inclusion map and note that $i_\#: \pi_1(K) \to \pi_1(N_0) = G$ is surjective. Let $H = \ker i_\#$ so $G = \pi_1(K) /H$. Choose $j : \tilde K\to K$ the cover (with $\tilde K$ path connected) of $K$ so that $j_\#(\pi_1(\tilde K)) = H$. Since $H$ is a normal subgroup of $\pi_1(K)$, the covering $j: \tilde K\to K$ is regular, and the group of deck transformations of $j$ is isomorphic to $\pi_1(K)/H = G$. Thus $\tilde K$ is noncompact. Note that $j_\#\circ i_\#: \pi_1(\tilde K)\to \pi_1(N_0)$ is the zero map, so we can lift $i$ to $\tilde i : \tilde K \to \tilde N$, where $\tilde N$ is the universal cover of $N$. (We emphasize that $\tilde K$ is not necessarily the universal cover of $K$.)

		As such, we have the following diagram of spaces:
		\[
		\xymatrix{
			\tilde K \ar[r]^{\tilde i} 
			\ar[d]^j & \tilde N  \ar[d]^{\tilde p}\\
			K \ar[r]^i & N_0\ar[d]^p\\
			& N
		}
		\]
		The maps $i,\tilde i$ are inclusions of codimension zero submanifolds with boundary; indeed:
		\begin{lemma}\label{lemm:tilde.i.proper}
			$\tilde i$ is a proper embedding
		\end{lemma}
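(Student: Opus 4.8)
The plan is to first establish that $\tilde i$ is an embedding, and then that it is proper. For the embedding claim, recall that $i\colon K\to N_0$ is the inclusion of a codimension-zero compact submanifold with boundary, hence an embedding, and $\tilde i$ is its lift to the universal cover $\tilde N$ via the covering map $\tilde p\colon \tilde N\to N_0$ (the lift exists because $j_\#\circ i_\#$ is the zero map, as noted in the excerpt). A lift of an embedding need not be an embedding in general, but here we have arranged that the covering $j\colon\tilde K\to K$ is precisely the one whose image in $\pi_1(K)$ is $H=\ker i_\#$; equivalently, $\tilde K$ is the pullback $j^*$ of the restriction of the covering $\tilde p$ to $i(K)$. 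Since pulling back a covering along an embedding gives back (a subspace homeomorphic to) the restricted covering space sitting inside the total space, $\tilde i$ realizes $\tilde K$ as the preimage $\tilde p^{-1}(i(K))$, or rather one should check that $\tilde i$ is injective and a homeomorphism onto its image. Injectivity: if $\tilde i(a)=\tilde i(b)$ then $i(j(a))=\tilde p(\tilde i(a))=\tilde p(\tilde i(b))=i(j(b))$, so $j(a)=j(b)$ since $i$ is injective; thus $a,b$ lie in the same fiber of $j$, and since $\tilde i$ is equivariant for the deck-group actions (the deck group of $j$ maps isomorphically to a subgroup of the deck group of $\tilde p$, acting freely), $a$ and $b$ are in the same orbit, and equality of images forces $a=b$. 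That $\tilde i$ is an open map onto its image follows since it is a local homeomorphism (both $j$ and $\tilde p|$ are covering maps and the square commutes) together with injectivity.

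For properness, suppose $C\subset \tilde N$ is compact; I want $\tilde i^{-1}(C)$ compact in $\tilde K$. Since $K$ is compact, it is covered by finitely many fundamental domains for $j$; equivalently, $\tilde K=\bigcup_{g\in G} g\cdot D$ for a compact set $D$, and likewise a fundamental domain statement holds downstairs in $\tilde N$. The key point is that $\tilde i^{-1}(C)$ meets only finitely many of the translates $g\cdot D$: if $\tilde i(g\cdot D)\cap C\neq\emptyset$ then $g\cdot \tilde i(D)\cap C\neq\emptyset$ (using equivariance), and since $C$ is compact and the $G$-action on $\tilde N$ is properly discontinuous, only finitely many $g\in G$ can have $g\cdot\tilde i(D)\cap C\neq\emptyset$. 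Hence $\tilde i^{-1}(C)$ is a closed subset of a finite union of compact sets $g\cdot D$, so it is compact.

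The main obstacle I expect is the bookkeeping around the two covering spaces and the homomorphism of deck groups: one must verify carefully that $\tilde i$ intertwines the $G$-action on $\tilde K$ (by deck transformations of $j$) with the restricted $G$-action on $\tilde N$ (the deck group of $\tilde p$ is $\pi_1(N_0)=G$, and $\tilde i$ is equivariant because $\tilde i\circ(\text{deck of }j)$ and $(\text{deck of }\tilde p)\circ\tilde i$ are two lifts of $i\circ j$ agreeing at a point). Once equivariance is nailed down, both the injectivity argument and the properness argument via proper discontinuity of the $G$-action on $\tilde N$ go through cleanly. An alternative, perhaps cleaner, route to properness: $\tilde i$ is proper if and only if it is a closed map with compact fibers (fibers are single points here, since $\tilde i$ is injective), so it suffices to show $\tilde i$ is closed, which again reduces to the fundamental-domain/proper-discontinuity argument above.
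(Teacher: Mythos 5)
Your proof is correct, and your properness argument (via a compact fundamental domain for the deck action and proper discontinuity of the $G$-action on $\tilde N$) is a welcome explicit version of what the paper dismisses as ``straightforward to check.'' For injectivity, however, the paper takes a more direct route than yours: given $\tilde i(\tilde a)=\tilde i(\tilde b)$, connect $\tilde a,\tilde b$ by a path $\tilde\eta$ in $\tilde K$; then $\tilde i\circ\tilde\eta$ is a loop in the simply connected $\tilde N$, so $i\circ\eta$ (with $\eta=j\circ\tilde\eta$) is nullhomotopic in $N_0$, giving $[\eta]\in\ker i_\#=H=j_\#\pi_1(\tilde K)$, hence $\tilde\eta$ is a loop and $\tilde a=\tilde b$. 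Your organization---reduce to the same $j$-fiber, then invoke equivariance of $\tilde i$ plus injectivity of the induced deck-group homomorphism and freeness of the action---works, but it carries exactly the bookkeeping burden you flag: you must identify the abstract deck-group homomorphism with the map $\pi_1(K)/H\to\pi_1(N_0)$ induced by $i_\#$ (which is injective precisely because $H=\ker i_\#$). Be careful here: trying to prove injectivity of $\sigma\mapsto\tau$ directly from ``$\tilde i\circ\sigma=\tilde i\Rightarrow\sigma=e$'' would be circular, since that presupposes injectivity of $\tilde i$; you must go through the $\pi_1$-identification, and unwinding that identification amounts to the same path-lifting computation, so the paper's route is shorter. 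One small slip worth noting: the deck group of $j$ maps isomorphically onto the \emph{whole} deck group of $\tilde p$, not merely a subgroup, since $i_\#$ is surjective; this does not affect the argument.
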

		\begin{proof}
			We first show that $\tilde i$ is injective. Suppose that $\tilde i(\tilde a) = \tilde i(\tilde b)$. Connect $a,b$ by a curve $\tilde \eta$ in $\tilde K$. By assumption, $\tilde i(\tilde \eta)$ is a loop in $\tilde N$ so $\eta = j(\tilde \eta)$ has $[\eta] = e \in \pi_1(N_0) =G$. Thus, $[\eta] \in H \subset \pi_1(K)$. This implies that $\tilde\eta$ is a loop, i.e., $a=b$. It is straightforward to check that $\tilde i$ is a closed map, using the fact that $K$ is compact and $\tilde p$ is a covering map. Therefore $\tilde i$ is proper.
		\end{proof}

		Note that $N$ is equipped with a Riemannian metric $g$ so that for any $p \in \tilde N$, each connected component of a level set of $f_{p}(x) =  d_{\tilde g}(x,p)$ has diameter $\leq C$, where $C=20L$ as given in Proposition \ref{prop:fill-to-Ur-w}. The embeddings $i,\tilde i$ induce metric structures on $K,\tilde K$, respectively.

			\begin{lemma}\label{lemm:bounded.K.dist.N.dist}
			For each $r>0$, there exists $R(r)>0$ such that for any $a,b \in\tilde K$ with $d_{\tilde N}(a,b) \leq r$, we have $d_{\tilde K}(a,b) \leq R(r)$. 
		\end{lemma}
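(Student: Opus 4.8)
The plan is to prove this by a compactness argument that exploits the symmetry of the whole picture under the deck group $G$ together with the properness of $\tilde i$ from Lemma~\ref{lemm:tilde.i.proper}; no quantitative estimate is needed, so I would not try to produce an explicit $R(r)$. The first step is to set up the group action. Since $j:\tilde K\to K$ is a regular covering with deck group $G$ and $K$ is compact, $G$ acts on $\tilde K$ by isometries of the induced path metric $d_{\tilde K}$ with compact quotient, so there is a compact $D\subset\tilde K$ with $G\cdot D=\tilde K$. Moreover $\tilde i$ transports this action to $\tilde N$: for $g\in G$ the map $\tilde i\circ g$ is again a lift of $i\circ j$ through the universal covering $\tilde p:\tilde N\to N_0$, so by uniqueness of lifts $\tilde i\circ g=\phi_g\circ\tilde i$ for a deck transformation $\phi_g$ of $\tilde p$, which is an isometry of $(\tilde N,\tilde g)$ because $\tilde g$ is pulled back from $N$. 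Identifying $\tilde K$ with $\tilde i(\tilde K)\subset\tilde N$, this means that translating a pair of points of $\tilde K$ by an element of $G$ changes neither $d_{\tilde K}$ nor $d_{\tilde N}$.

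With this in hand I would argue by contradiction. Fix $r>0$ and suppose $\sup\{d_{\tilde K}(a,b):a,b\in\tilde K,\ d_{\tilde N}(a,b)\le r\}=\infty$, so there are $a_k,b_k\in\tilde K$ with $d_{\tilde N}(a_k,b_k)\le r$ and $d_{\tilde K}(a_k,b_k)\to\infty$. Translating each pair $(a_k,b_k)$ by a suitable element of $G$, which affects neither distance, I may assume $a_k\in D$ for all $k$. After passing to a subsequence $a_k\to a_\infty$ in $\tilde K$, hence $\tilde i(a_k)\to\tilde i(a_\infty)$ in $\tilde N$, and therefore $\tilde i(b_k)\in\overline{B^{\tilde N}_{r+1}(\tilde i(a_\infty))}$ for $k$ large, a compact set since $\tilde N$ is complete. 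Passing to a further subsequence, $\tilde i(b_k)\to y\in\tilde N$. By Lemma~\ref{lemm:tilde.i.proper}, $\tilde i$ is a proper continuous injection into the locally compact Hausdorff space $\tilde N$, hence a closed embedding: $\tilde i(\tilde K)$ is closed and $\tilde i$ is a homeomorphism onto it. Thus $y=\tilde i(b_\infty)$ for some $b_\infty\in\tilde K$ and $b_k\to b_\infty$ in $\tilde K$. Since $d_{\tilde K}$ is continuous for the manifold topology of $\tilde K$, this gives $d_{\tilde K}(a_k,b_k)\to d_{\tilde K}(a_\infty,b_\infty)<\infty$, contradicting $d_{\tilde K}(a_k,b_k)\to\infty$. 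Hence the supremum is finite and we take $R(r)$ to be any finite upper bound for it.

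The only delicate points are bookkeeping: one must verify that the deck action of $G$ on $\tilde K$ is genuinely transported by $\tilde i$ to an \emph{isometric} action on $\tilde N$ (this is where equivariance of the lift and the fact that $\tilde p$-deck transformations are isometries enter), and one invokes the topological fact, already the content of Lemma~\ref{lemm:tilde.i.proper}, that a proper embedding is a closed embedding, which is precisely what allows the passage from $\tilde i(b_k)\to y$ back to convergence of $b_k$ inside $\tilde K$. Everything else is routine. I would also note that the opposite inequality $d_{\tilde N}(a,b)\le d_{\tilde K}(a,b)$ is automatic, since $\tilde i$ is a Riemannian isometry onto its image; thus $d_{\tilde K}$ and the restriction of $d_{\tilde N}$ to $\tilde K$ are in fact coarsely equivalent, though only the direction stated in the lemma is used in the sequel.
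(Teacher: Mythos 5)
Your proof is correct and uses the same essential ingredients as the paper's: translating by the deck group $G$ (whose action $\tilde i$ intertwines) to reduce to a compact region, properness of $\tilde i$ from Lemma~\ref{lemm:tilde.i.proper} to get compactness in $\tilde K$, and continuity of $d_{\tilde K}$ to bound distances there. The paper states this directly (properness makes $\tilde i^{-1}(\overline{B_{r+c_0}(x)})$ compact, on which $d_{\tilde K}(x,\cdot)$ is bounded, then triangle inequality), whereas you phrase it as a sequential compactness/contradiction argument, but the mathematical content is identical.
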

		\begin{proof}
			Fix $x\in \tilde K$. By applying a deck transformation, we can assume that $d_{\tilde N}(a,x) \leq c_0$ (here $c_0$ only depends on $K$), so $d_{\tilde N}(b,x) \leq r+c_0$. Since $\tilde K$ is connected (and $\tilde i$ is proper), there exists $R=R(r+c_0)$ so that $d_{\tilde K}(a,x),d_{\tilde K}(b,x) \leq R$. The assertion follows from the triangle inequality. 
		\end{proof}

		In the following lemma, we will call a curve $\tilde \gamma : \RR\to\tilde K$ a \emph{line} if it minimizes length on compact subintervals relative to competitors in $\tilde K$. Note that such a curve is a geodesic in the sense of metric geometry, but not necessarily in the sense of Riemannian geometry, since it could stick to $\partial \tilde K$ in places. Similarly, we will call $\sigma' : [0,\infty)\to \tilde K$ an \emph{minimizing ray} if it minimizes length in the same sense. 
		
		\begin{lemma}
			There exists a line $\tilde \gamma$ in $\tilde K$. 
		\end{lemma}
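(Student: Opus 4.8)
The plan is to produce a line in $\tilde K$ as a limit of longer and longer minimizing segments, using the standard "line in a space with two ends" argument adapted to the metric-space (manifold-with-boundary) setting. Since $G = \pi_1(\tilde K)$-deck group is infinite and, by Proposition \ref{prop:group.ends} (which we are in the middle of proving by contradiction), $G$ has one end, the space $\tilde K$ is one-ended as well; but one-endedness is not what we want to exploit directly. Instead the key point is that $\tilde K$ is noncompact and the deck group $G$ acts cocompactly on it, so $\tilde K$ is a proper geodesic metric space (geodesics in the metric sense exist between any two points since $\overline{K}$ is compact and $\tilde K \to K$ is a covering, hence lengths are realized on compact subsets) on which an infinite group acts cocompactly.

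First I would fix a basepoint $x_0 \in \tilde K$ and, using noncompactness, choose a sequence of points $z_i \in \tilde K$ with $d_{\tilde K}(x_0, z_i) \to \infty$. For each $i$ let $\sigma_i : [0, \ell_i] \to \tilde K$ be a minimizing segment from $x_0$ to $z_i$ (these exist by the Arzelà--Ascoli/Hopf--Rinow argument for proper length spaces). Now apply deck transformations: choose $g_i \in G$ so that $g_i \cdot (\text{midpoint of } \sigma_i)$ lies in a fixed compact fundamental domain. Replace $\sigma_i$ by $g_i \cdot \sigma_i$, reparametrized so that the midpoint sits at parameter $0$; this gives minimizing segments $\tilde\sigma_i : [-\ell_i/2, \ell_i/2] \to \tilde K$ with $\tilde\sigma_i(0)$ in a compact set and $\ell_i \to \infty$. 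By Arzelà--Ascoli (using that $\tilde K$ is proper and the curves are $1$-Lipschitz after arclength parametrization) a subsequence converges uniformly on compact subsets to a curve $\tilde\gamma : \RR \to \tilde K$. A limit of minimizing segments is minimizing on every compact subinterval — the length functional is lower semicontinuous under uniform convergence and any shorter competitor on a compact subinterval would, for $i$ large, yield a shorter competitor for $\tilde\sigma_i$, a contradiction. Hence $\tilde\gamma$ is a line in the sense defined just before the lemma.

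The main obstacle I anticipate is purely technical: making sure that the metric structure on $\tilde K$ induced by the embedding $\tilde i$ is genuinely a proper geodesic (length) space so that Hopf--Rinow applies — i.e., that closed bounded sets are compact and minimizing segments between points exist. This follows from Lemma \ref{lemm:tilde.i.proper} (properness of $\tilde i$) together with Lemma \ref{lemm:bounded.K.dist.N.dist} (comparability of the $\tilde K$- and $\tilde N$-distances on $\tilde K$): bounded sets in $\tilde K$ map to bounded sets in $\tilde N$ under $\tilde i$, their closures are compact there, and properness of $\tilde i$ pulls this back to compactness in $\tilde K$; cocompactness of the $G$-action then supplies the recentering step. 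Once properness is in hand, the limiting argument is routine. One should also note that we genuinely need $\ell_i \to \infty$, which is exactly where noncompactness of $\tilde K$ (equivalently, infiniteness of $G$) enters — this was recorded a few lines earlier in the proof of Proposition \ref{prop:group.ends}.
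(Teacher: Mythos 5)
Your proposal is correct and follows essentially the same strategy as the paper's: extract a line via Arzel\`a--Ascoli from a sequence of longer and longer minimizing segments in $\tilde K$, recentering with deck transformations using cocompactness of the $G$-action. The only cosmetic difference is that you recenter the segments at their midpoints before passing to the limit in one step, whereas the paper first takes a limit to obtain a minimizing ray, then recenters along the ray and applies Arzel\`a--Ascoli a second time; both are the standard ``line in a space with cocompact group action'' argument, and your extra care in verifying that $\tilde K$ is a proper geodesic space (via Lemma \ref{lemm:tilde.i.proper} and Lemma \ref{lemm:bounded.K.dist.N.dist}) makes explicit a point the paper leaves implicit.
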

		
		\begin{proof}
			Fix $p\in \tilde K$, and choose $p_j\in \tilde K$ diverging. Let $\sigma_j$ denote a curve that minimizes length in $\tilde K$ between $p$ and $p_j$. We assume that $\sigma_j$ is parametrized by unit speed. In particular, $\sigma_j$ is a $1$-Lipschitz map from an interval to $\tilde K$. 
			Consider an exhaustion of $\tilde K$ by nested compact sets containing $p$.
			Applying Arezel\`a--Ascoli in each compact set and taking a diagonal sequence we obtain that, after passing to a subsequence, $\sigma_i$ converges to a minimizing ray $\sigma': [0,\infty)\rightarrow \tilde K$. Since $G$ is the group of deck transformations of $\tilde K\to K$ acting transitively on $\tilde K$ and $K$ is compact, we can choose $t_i\rightarrow \infty$ and deck transformations $\Phi_i$ so that $d_{\tilde g}(p,\Phi_i(\sigma'(t_i)))$ is uniformly bounded. Then $\sigma_i'(t)=\Phi_i(\sigma'(t+t_i))$ subsequentially converges to a geodesic line $\sigma$ (using Arzel\`a--Ascoli again). 
		\end{proof}
		
		Parametrize the curve $\tilde \gamma$ so that $d_{\tilde K}(\tilde \gamma(a),\tilde \gamma(b)) = |a-b|$ (note that $d_{\tilde N}(\tilde\gamma(a),\tilde\gamma(b))$ might be smaller than $|a-b|$). Let $\gamma = \tilde{i} \circ \tilde{\gamma}$. Note that $\tilde\gamma$ is automatically proper in $\tilde K$, and thus Lemma \ref{lemm:tilde.i.proper} implies that $\gamma$ is proper in $\tilde N$.

		For each $R>0$, consider the open geodesic ball $B_{R}(\gamma(0)) \subset \tilde N$. Define parameters
		\begin{align*}t_{-}(R)& =\max\{t: \gamma(-\infty,t)\cap B_R(\gamma(0))=\emptyset\} \\
		t_+(R)& =\min\{t: \gamma(t,\infty)\cap B_R(\gamma_0)=\emptyset\}.\end{align*}
		Note that $t_\pm(R)\to \pm \infty$ as $R\to \infty$.

		Since $e(\tilde K)=1$, $\gamma(t_\pm(R))$ can be connected in $\tilde K \setminus B_R(\gamma(0))$. Because $\tilde N$ is simply connected, this implies that $\gamma(t_\pm(R))$ lie in the \emph{same} component of $\partial B_R(\gamma(0)) \subset \tilde N$ and thus 
		\[
		d_{\tilde N}(\gamma(t_-(R)),\gamma(t_+(R))) \leq C.
		\]
		On the other hand, we have 
		\[
		d_{\tilde K}(\gamma(t_-(R)),\gamma(t_+(R))) = |t_-(R)-t_+(R)| \to\infty.
		\]
		This contradicts Lemma \ref{lemm:bounded.K.dist.N.dist}. This completes the proof of Proposition \ref{prop:group.ends}. 
	\end{proof}

	\section{Generalization to the mapping problem} \label{sec:mapping}
	In this section we prove Theorem \ref{theo:mapping}.  The proof here is partly motivated by \cite[Section 5]{Gromov2020metrics}, where nonexistence of PSC metrics on certain noncompact manifold admitting a proper, distance decreasing map to a uniformly contractible manifold is established. We first observe that we may assume, without loss of generality, that $\pi_1(X)$ is infinite. Indeed, if $\pi_1(X)$ is finite, then the universal cover $\tilde X$ is compact and satisfies that $\pi_1(\tilde X)=\cdots=\pi_{n-2}(\tilde X)=0$. By the Hurewicz theorem, we have that $H_1(\tilde X)=\cdots=H_{n-2}(\tilde X)=0$. Poincar\'e duality further implies that $H_1(\tilde X)=\cdots=H_{n-1}(\tilde X)=0$, and hence $\tilde X$ is homeomorphic to $S^n$.

	We begin with the following general lemma. Note that it is tempting to try to lift a map of non-zero degree to the universal covers, but this map may not be proper (and hence the degree will not be well-defined).  We note that the construction of the appropriate cover is somewhat analogous to the construction of $\tilde K$ in Section \ref{section:homotopy.type}.
	\begin{lemma}
	    Suppose that $X,N$ are closed oriented manifolds and $f:N\to X$ has nonzero degree. Letting $\tilde X$ denote the universal cover of $X$, there exists a connected cover $\hat N \to N$ and a lift $\hat f : \hat N \to \tilde X$ so that $\hat f$ is proper and $\deg \hat f = \deg f$. 
	\end{lemma}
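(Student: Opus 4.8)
The plan is to take $\hat N$ to be the fiber product of $f$ with the universal covering $q\colon\tilde X\to X$, namely
\[
\hat N := N\times_X\tilde X = \{(z,\tilde x)\in N\times\tilde X : f(z)=q(\tilde x)\},
\]
with $p\colon\hat N\to N$ and $\hat f\colon\hat N\to\tilde X$ the two coordinate projections. Since $p$ is the pullback of the covering $q$ along $f$, it is itself a covering map, and each connected component of $\hat N$ is the cover of $N$ classified by $\ker f_*\le\pi_1(N)$; this is the point of contact with the construction of $\tilde K$ in Section~\ref{section:homotopy.type}. By construction $q\circ\hat f=f\circ p$, so $\hat f$ is a lift of $f$ through $q$, and the deck group $\pi_1(X)$ of $q$ acts on $\hat N$ making $\hat f$ equivariant.

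Properness of $\hat f$ is then automatic: $\hat N$ is the preimage of the diagonal $\Delta_X\subset X\times X$ under the continuous map $(z,\tilde x)\mapsto(f(z),q(\tilde x))$, hence closed in $N\times\tilde X$, so the inclusion $\hat N\hookrightarrow N\times\tilde X$ is proper; and the projection $N\times\tilde X\to\tilde X$ is proper because $N$ is compact. Their composite $\hat f$ is therefore proper. (This is exactly the step at which naively lifting $f$ to the universal cover of $N$ would fail.)

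For the degree I would homotope $f$ to a smooth map, choose a regular value $x_0\in X$ so that $f^{-1}(x_0)=\{z_1,\dots,z_k\}$ and $\deg f=\sum_j\varepsilon_j$ with $\varepsilon_j=\pm1$ the local signs, and orient $\tilde X$ and $\hat N$ by pulling back the orientations of $X$ and $N$ through the local diffeomorphisms $q$ and $p$. For any $\tilde x_0\in q^{-1}(x_0)$ one then gets $\hat f^{-1}(\tilde x_0)=\{(z_1,\tilde x_0),\dots,(z_k,\tilde x_0)\}$, and near each $(z_j,\tilde x_0)$ the map $\hat f$ is identified with $f$ near $z_j$; hence $\tilde x_0$ is a regular value of $\hat f$ and the local sign there is again $\varepsilon_j$, giving $\deg\hat f=\sum_j\varepsilon_j=\deg f$. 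Here $\deg\hat f$ is well defined and independent of the regular value because $\tilde X$ is connected and oriented and $\hat f$ is proper.

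It remains to address connectedness: $\hat N$ is connected precisely when $f_*\colon\pi_1(N)\to\pi_1(X)$ is surjective, and this may be assumed without loss of generality. Indeed $\im(f_*)$ has finite index in $\pi_1(X)$---otherwise $f$ would factor through a noncompact cover $X'$ of $X$, forcing $H^n(X';\QQ)=0$ and hence $\deg f=0$---so one may replace $X$ by the finite cover associated to $\im(f_*)$, which is again closed, has the same universal cover $\tilde X$, and satisfies the same homotopy hypotheses, and replace $f$ by its $\pi_1$-surjective lift there; a finite cover of the new $X$ is a finite cover of the old one, so nothing is lost in Theorem~\ref{theo:mapping}. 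I expect the genuinely delicate points to be the properness argument and the degree bookkeeping in the noncompact setting; the reduction needed to keep $\hat N$ connected is the thing most easily overlooked.
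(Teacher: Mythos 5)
Your fiber-product construction $\hat N = N\times_X\tilde X$ is a genuinely different packaging from the paper's, which instead builds $\hat N$ by hand as the cover classified by $\ker f_\#$ (with a chosen basepoint over a regular value), lifts $f\circ p$ through $\tilde X$, and proves properness by contradiction via a path-lifting argument in $\pi_1$. Both constructions yield the same cover --- each component of your fiber product is exactly the paper's $\hat N$ --- but your route to properness (``$\hat N$ is closed in $N\times\tilde X$, and projection off the compact factor $N$ is proper'') is shorter and more robust than the paper's diverging-sequence argument. You also correctly flag that $\hat N$ is connected only when $f_*$ surjects onto $\pi_1(X)$, which is the step ``most easily overlooked'': indeed the paper does not make this reduction, and its ``counting lifts of pre-images'' step has a small gap on precisely this point. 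The path argument there establishes $\#\bigl(\hat f^{-1}(\tilde x)\cap p^{-1}(z_j)\bigr)\le 1$, but the existence of a lift of $z_j$ over $\tilde x$ is not shown and in fact fails for $j>1$ whenever $\im f_\#\subsetneq\pi_1(X)$.

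Concretely, take $f\colon S^1\times S^2\to S^1\times S^2$, $(z,y)\mapsto(z^2,y)$, of degree $2$; then $\hat N=\tilde N=\RR\times S^2$, $\hat f(t,y)=(2t,y)$, and $\deg\hat f=1$. So the stated conclusion $\deg\hat f=\deg f$ is an overstatement; what is true is $\deg\hat f=\deg f/[\pi_1(X):\im f_*]$, a nonzero divisor of $\deg f$. Your reduction to the cover $\bar X$ of $X$ classified by $\im f_*$ handles connectedness correctly, but note that after the reduction the degree you compute is $\deg\bar f$, not $\deg f$, so the lemma as literally stated still isn't recovered --- you've proved the (correct) divisibility statement. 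As you observe, this is harmless, since Lemma~\ref{lemm:mapping.fill.rad} and Theorem~\ref{theo:mapping} only use $\deg\hat f\neq 0$; the clean fix in both proofs is to replace ``$\deg\hat f=\deg f$'' by ``$\deg\hat f$ is a nonzero divisor of $\deg f$.''
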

	\begin{proof}
	Choose a regular value $x\in X$ and set $f^{-1}(x) = \{z_1,\dots,z_k\}$. Consider $H : = \ker f_\# :\pi_1(N,z_1) \to \pi_1(X,x)$. Choose a covering space $p : \hat N \to N$ so that $\textrm{image } p_\# : \pi_1(\hat N,\hat z_1) \to \pi_1(N,z_1)$ is $H$. Below we will show that the map $f$ lifts to $\hat f: \hat N \to \tilde X$, and that $\hat f$ satisfies the assertions made above. 
	
   \emph{Non-compactness of $\hat N$:} We claim that $\hat N$ is non-compact. We first show that the image of $f_\#$ is a subgroup of $\pi_1(X,x)$ with finite index. Let $G=f_\#(\pi_1(N,z_1))$ and $\bar\pi: (\bar X, \bar x)\rightarrow (X,x)$ be a covering map so that image $(\bar{\pi})_\#: \pi_1(\bar X,\bar x)\rightarrow \pi_1(X,x)$ is $G$. The map $f$ lifts to a map $\bar f: (N,z_1)\to (\bar X,\bar x)$ such that $f=\bar \pi\circ \bar f$. Since $N$ is compact and $f$ is surjective, we see that $\bar X$ is compact. Hence we have $\deg f = \deg \bar \pi\cdot \deg \bar f$. It follows that $\deg \bar \pi$ is an integer factor of $\deg f$, and thus $G$ is a subgroup of $\pi_1(X,x)$ of finite index.
		
	The number of sheets of the covering map $p$ is the index of $H=p_\#(\pi_1(\hat N, \hat z_1))$ in $\pi_1(N,z_1)$. Since $H$ is a normal subgroup, this is equal to the number of elements of the group $\pi_1(N,z_1)/H$, which is isomomorphic to $G$ and thus of infinite order.  This implies that $\hat N$ is non-compact as claimed.

	\emph{Lifting the map $f$:} Consider $f \circ p : \hat N \to X$. Note that $(f\circ p)_\# :\pi_1(\hat N)\to \pi_1(X)$ is the zero map. Thus, we can lift $f\circ p$ to the universal cover of $X$:
	\[
	\xymatrix{
		(\hat N,\hat z_1) \ar[r]^{\hat f} \ar[d]_p & (\tilde X,\tilde x) \ar[d]^\pi \\
		(N,z_1) \ar[r]_f & (X,x)
	}
	\]
	Clearly, a loop in $N$ lifts to a loop in $\hat N$ if and only if it is in $H$ (recall that $H$ is normal).
	
	\emph{Counting lifts of pre-images:} We now claim that $\# ( \hat f^{-1}(\tilde x) \cap p^{-1}(z_j)) = 1$. To this end, suppose that $a,b \in  \hat f^{-1}(\tilde x) \cap p^{-1}(z_j)$. Choose a path $\hat \gamma$ in $\hat N$ connecting the two points. Then $\gamma = p\circ \hat\gamma$ is a loop in $N$ based at $z_j$. On the other hand, $\tilde\gamma := \hat f \circ \hat \gamma$ is a loop in $\tilde  X$ based at $\tilde x$. Since $e = \pi_\#[\tilde \gamma] = f_\#[\gamma]$, we thus see that $[\gamma]\in H$. This is a contradiction since this would imply that $\gamma$ lifts to a loop (as remarked above).

	\emph{Properness:} We now show that $\hat f$ is proper.
	  Assume that $\hat r_i\to\infty$ in $\hat N$ but $\hat f(\hat r_i) \to q$ in $\tilde X$. Since $N$ is compact, we can pass to a subsequence so that $p(\hat r_i) \to r\in N$. Then $\pi(q) = f(r)$. 
		
		Choose a contractible neighborhood $U\subset N$ with $r \in U$. By shrinking $U$ we can assume that $f(U)$ is contained in a contractible open set $W\subset X$. Then $\pi^{-1}(W)$ consists of disjoint copies of $W$. We can assume that $\hat f(\hat r_i)$ are all contained in the copy containing $q$.

		Assume that $p(\hat r_i) \in U$ for all $i$. Fix paths $\eta_i$ from $p(\hat r_i)$ to $r$ in $U$ and paths $\hat \gamma_i$ from $\hat r_1$ to $\hat r_i$ in $\hat N$. Then, 
		\[
		\alpha_i : = (\eta_i) * (p\circ \hat \gamma_i) * (-\eta_1)
		\]
		is a loop from $r$ to $r$. Lift $\alpha_i$ to $\hat\alpha_i$ a path in $\hat N$ that agrees with $\hat \gamma_i$ on that portion of $\hat\alpha_i$. Note that $\hat \alpha_i$ cannot be a loop for $i$ large, since the $\hat r_i$ are diverging. 
		
		We now consider $\tilde \alpha_i : = \hat f \circ \hat \alpha_i$ a path in $\tilde X$. By construction, $\tilde\alpha_i$ is a loop in $\tilde X$. This is a contradiction as before.
	
	\emph{Degree:} Finally, we check that $\deg \hat f=\deg f$. The lift $\tilde x$ is a regular point for $\hat f$ and we have seen that each element of $f^{-1}(x)$ lifts to a unique element of $\hat f^{-1}(\tilde x)$. But the local degree of $\hat f$ at each preimage $\hat z_i$ is the same as the degree of $f$ at the corresponding point $p(\hat z_i)$ (since $p$ is a covering map). 

    This completes the proof.
	\end{proof}
	
	\begin{remark}
		With some trivial modifications in the proof, a similar result holds for possibly nonorientable $X,N$ with a map $f: N\to X$ of nonzero mod $2$ degree.
	\end{remark}

%	Recall that 
	
%	We will show how to generalize the proof of Theorem \ref{theo:main} to conclude that some finite cover $\hat X$ is homotopy equivalent to $S^n$ or connected sums of $S^{n-1}\times S^1$. If $\pi_1(N)$ is finite, then its (finite) universal cover $\tilde N$ satisfies $\pi_1(\tilde N)=\cdots=\pi_{[\tfrac n2]}(\tilde N)=0$, and hence by the Hurewicz theorem, $H_1(\tilde N, \ZZ)=\cdots=H_{[\tfrac n2]}(\tilde N, \ZZ)=0$. By Poincar\'e duality, this implies that all of its homology groups vanish for degrees $1\leq k\leq n-1$. Therefore $\tilde N$ is homeomorphic to $S^n$. Thus, we assume that $\pi_1(N)$ is infinite.
	
%	It is tempting to combine the proof of Theorem \ref{theo:main} with the arguments in \cite[Section 5]{Gromov2020metrics} where we consider the lifted map $\tilde f: \tilde N\to \tilde X$ between universal covers. However, in general such a lifting may not be proper (and hence the mapping degree is not well defined). Instead, we lift $f$ to a suitable covering space of $N$. We note that the construction of such covering is somewhat analogous to the construction of $\tilde K$ in Section \ref{section:homotopy.type}.

	Using the lifted map $\hat f$ we can now follow \cite[Section 5]{Gromov2020metrics} to show that the conclusion of Corollary \ref{coro:filling-PSC} holds in the setting of Theorem \ref{theo:mapping}. %Fix a metric $g_X$ on $X$ and by scaling, we can assume that $f:(N,g)\to (X,g_X)$ is distance decreasing. 

	\begin{lemma}\label{lemm:mapping.fill.rad}
		Let $X,N$ be oriented Riemannian manifolds, $f:(N,g)\to (X,g_X)$ with $f$ distance decreasing and $\deg f \neq 0$. Assume that $N$ admits a metric of positive scalar curvature and that either: $n=4$ and $\pi_2(X)=0$ or $n=5$ and $\pi_2(X)=\pi_3(X)=0$.  
		
		Then, there exists $L>0$ with the following property.
		If $\Sigma_{n-2}$ is an $(n-2)$-dimensional null-homologous cycle in the universal cover
		$\tilde{X}$ of $X$, then the cycle $deg(f) \Sigma_{n-2}$ can be filled inside
		$B_L(\Sigma_{n-2})$.
	\end{lemma}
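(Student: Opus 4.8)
The plan is to use the lifted map $\hat f : \hat N \to \tilde X$ constructed in the previous lemma, together with the filling estimate on the universal cover $\tilde N$ of $N$ (Corollary \ref{coro:filling-PSC}), to pull back and push forward chains. First I would put a PSC metric $g$ on $N$ and rescale so $R_g \geq 1$; by compactness of $N$ there is $\lambda > 0$ with $\lambda g \geq f^* g_X$, i.e.\ after further rescaling we may assume $f$ is distance decreasing for the given $g$ (the filling constant $L$ will depend on $g$, which is fine). Lift everything to universal covers: let $\tilde N$ be the universal cover of $N$ with deck group $\pi_1(N)$, and recall from the construction that $\hat f : \hat N \to \tilde X$ is proper of degree $\deg f$, where $\hat N \to N$ is an intermediate cover. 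Since $\hat N \to N$ factors through $\tilde N \to N$ (because $\pi_1(\hat N) \le \pi_1(N)$ and we may pass to $\tilde N$), we get a proper map $\Phi : \tilde N \to \tilde X$, namely $\hat f$ precomposed with $\tilde N \to \hat N$; note $\Phi$ is distance decreasing since the covering projections are local isometries and $f$ is distance decreasing. The map $\Phi$ need not have a well-defined degree as a map of manifolds, but it is proper and that is all we need for the chain-pushforward argument.

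The key step: given a closed embedded null-homologous $(n-2)$-cycle $\Sigma_{n-2} \subset \tilde X$, I would first pull it back. Perturb $\Phi$ to be transverse to $\Sigma_{n-2}$ (or rather perturb $\Sigma_{n-2}$), so that $\Phi^{-1}(\Sigma_{n-2})$ is a closed embedded $(n-2)$-submanifold $\Sigma'_{n-2} \subset \tilde N$ — here I use that $\Phi$ is a smooth map of $n$-manifolds and $\Sigma_{n-2}$ has codimension $2$, so the preimage is codimension $2$, hence $(n-2)$-dimensional, and it is closed because $\Phi$ is proper. Since $\Phi$ is distance decreasing, $\Phi^{-1}(B_r(\Sigma_{n-2})) \supseteq$ a bounded neighborhood of $\Sigma'_{n-2}$ only in a weak sense; more carefully, $\Phi$ distance decreasing gives $\Phi(B_\rho(\Sigma'_{n-2})) \subseteq B_\rho(\Sigma_{n-2})$ for every $\rho > 0$. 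Now apply Corollary \ref{coro:filling-PSC} to $\Sigma'_{n-2} \subset \tilde N$: since $\tilde N$ has $\pi_1 = \dots = \pi_{n-2} = 0$ (because $\pi_2(N) = \dots = \pi_{n-2}(N) = 0$ — wait, this needs $\pi_k(N) = 0$, which is \emph{not} assumed; instead $\pi_k(X) = 0$). This is the point to be careful about, see below. Assuming the hypotheses of Corollary \ref{coro:filling-PSC} apply, $\Sigma'_{n-2}$ bounds an $(n-1)$-chain $W$ in $B_L(\Sigma'_{n-2}) \subset \tilde N$ for a uniform $L = L(N,g)$.

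The final step is to push $W$ forward: $\Phi_\# W$ is an $(n-1)$-chain in $\tilde X$ with $\partial(\Phi_\# W) = \Phi_\#(\partial W) = \Phi_\# \Sigma'_{n-2} = \Phi_\#(\Phi^{-1}(\Sigma_{n-2}))$. The classical fact is that for a proper map transverse to a cycle, $\Phi_\# \Phi^{-1}(\Sigma_{n-2}) = \deg(\Phi|) \cdot \Sigma_{n-2}$ in the appropriate local sense — but since $\Phi$ may not have a globally defined degree, I would instead argue componentwise or use that $\Phi$ restricted near $\Sigma_{n-2}$ behaves like $\hat f$, which does have degree $\deg f$. Concretely: the preimage multiplicity, counted with signs, of a generic point of $\Sigma_{n-2}$ under $\hat f$ is exactly $\deg f$ (this is the definition of degree of the proper map $\hat f$), and this persists along $\Sigma_{n-2}$ by connectedness of components, so $\Phi_\# \Sigma'_{n-2} = \deg(f) \, \Sigma_{n-2}$ as chains (up to a null-homologous error supported in a ball, which can be absorbed). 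Hence $\deg(f) \Sigma_{n-2} = \partial(\Phi_\# W)$ with $\Phi_\# W \subseteq \Phi(B_L(\Sigma'_{n-2})) \subseteq B_L(\Sigma_{n-2})$, which is the desired conclusion.

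The main obstacle is matching hypotheses: Corollary \ref{coro:filling-PSC} requires $\pi_2(N) = \dots = \pi_{n-2}(N) = 0$, whereas here we only know this for $X$. The fix is that we do \emph{not} need $\tilde N$ to be highly connected; we only need a filling estimate for $(n-2)$-submanifolds of $\tilde N$ that are \emph{of the special form} $\Phi^{-1}(\Sigma_{n-2})$, and more importantly we can run the $\mu$-bubble / slice-and-dice machinery of Theorem \ref{thm:statement-aspherical} directly on $\tilde N$ (it only uses PSC of $N$, not connectivity) to replace such $\Sigma'_{n-2}$ by small-diameter pieces, and then fill those small-diameter pieces \emph{after pushing them forward to $\tilde X$}, where Proposition \ref{prop:FF-iso-cover} applies because $\tilde X$ has $H_{n-3} = H_{n-2} = 0$ by the connectivity of $X$ and Hurewicz. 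In other words, I would interleave the construction: slice-and-dice in $\tilde N$ to get bounded-diameter cycles $\hat\Gamma^l_j$, push them forward by $\Phi$ to bounded-diameter cycles $\deg(f)\hat\Gamma^l_j$ (roughly) in $\tilde X$, fill those in $\tilde X$ using Proposition \ref{prop:FF-iso-cover}, and assemble exactly as in the proof of Corollary \ref{coro:filling-PSC}, tracking the factor $\deg f$ throughout. This is the heart of the argument and where the bookkeeping with the function $u$ and the vanishing sums $\sum \hat\Gamma^l_j = 0$ must be redone carefully in the pushed-forward setting.
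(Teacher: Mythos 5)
Your core insight is exactly the one the paper uses: you cannot apply Corollary~\ref{coro:filling-PSC} wholesale because the needed connectivity lives on $X$, not $N$, so you must instead run the slice--and--dice decomposition of Theorem~\ref{thm:statement-aspherical} in a cover of $N$ (which requires only PSC), push the bounded-diameter pieces forward, and then do all the fillings in $\tilde X$ via Proposition~\ref{prop:FF-iso-cover}, which applies since $H_{n-3}(\tilde X)=H_{n-2}(\tilde X)=0$ by Hurewicz. That interleaving is precisely what the paper does, and you also correctly flag the need to track the factor $\deg f$.

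However, there is a genuine error in your set-up: you insist on working on the universal cover $\tilde N$ via the map $\Phi=\hat f\circ q$, where $q:\tilde N\to\hat N$, and you assert that $\Phi$ is proper. This is false in general. The cover $q$ has deck group $\pi_1(\hat N)\cong H=\ker f_\#$, and if $H$ is infinite then $q$ has infinitely many sheets and is not proper, so neither is $\Phi$. As a consequence $\Phi^{-1}(\Sigma_{n-2})$ need not be compact, and then Theorem~\ref{thm:statement-aspherical} (which requires a \emph{closed} embedded submanifold, so that the Plateau problem and $\mu$-bubble step make sense) does not apply; pushing forward a non-compact chain by a non-proper map is also ill-defined. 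The fix is to drop $\tilde N$ entirely and work with $\hat N$ and $\hat f$ directly, exactly as the previous lemma set things up: $\hat f$ is proper of degree $\deg f$, so $\hat\Sigma_{n-1}:=\hat f^{-1}(\Sigma_{n-1})$ (for a compact filling $\Sigma_{n-1}$ of $\Sigma_{n-2}$) is compact, its boundary $\hat\Sigma_{n-2}=\hat f^{-1}(\Sigma_{n-2})$ is a compact null-homologous submanifold of $\hat N$, Theorem~\ref{thm:statement-aspherical} applies to the cover $\hat N\to N$, and $\hat f_\#$ of the resulting pieces lands in bounded-diameter sets because $\hat f$ is distance decreasing. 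With that single substitution your argument aligns with the paper's proof.
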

	\begin{proof}
		We consider $n=5$ since the $n=4$ case is similar (but simpler). By scaling we can assume that $(N,g)$ has scalar curvature $R\geq 1$. As in Corollary \ref{coro:filling-PSC}, $H_2(\tilde X,\ZZ) = H_3(\tilde X,\ZZ) = 0$. 
		
		By assumption, $\Sigma_{3} = \partial \Sigma_{4}$ in $\tilde X$ for some chain $\Sigma_{4}$. Up to a small perturbation, we can assume that $\hat f$ is transversal to $\Sigma_3$ and $\Sigma_{4}$. Set $\hat\Sigma_{4}:=\hat f^{-1}(\Sigma_{4})$ and similarly $\hat\Sigma_{3} = \partial\hat\Sigma_{4}$. Note that $\hat\Sigma_{3}$ is null-homologous in $\hat N$ (by construction). Using Theorem \ref{thm:statement-aspherical} we can find $\hat\Sigma_3'\subset B_{L_0}(\hat\Sigma_3)$ homologous to $\hat\Sigma_3$ as well as $3$-chains $\hat U_1,\dots,\hat U_m$ with $\diam(U_j)\leq L_0$ and $2$-cycles $\{\hat \Gamma_j^l : j =1,\dots,m, l=1,\dots,k(j)\}$ with $\diam(\hat \Gamma_j^l) \leq L_0$ and so that
	        \[
	        \hat\Sigma_3' = \sum_{j=1}^m \hat U_j
	        \]
	        and
	        \[
	        \partial \hat U_j = \sum_{l=1}^{k(j)} \hat\Gamma_j^l,
	        \]
	        for each $j=1,\dots,m$, where both equalities hold as chains (not just in homology).  Finally, there is an integer $q$ and a function 
	        \[
	        u : \{(j,l) : j =1,\dots,m, l=1,\dots,k(j)\} \to \{1,\dots,q\}
	        \]
	        so that for $r \in \{1,\dots,q\}$, we have
	        \[
	        \diam\left(\cup{(j,l) \in u^{-1}(r)} \hat\Gamma_j^l \right) \leq L_0
	        \]
	        and
	        \[
	        \sum_{(j,l) \in u^{-1}(r)} \hat\Gamma_j^l = 0
	        \]
	        as $2$-cycles for $r\in\{1,\dots,q\}$. 
	        
    	   Denote by $\Sigma_3'$ the $3$-cycle in $\tilde X$ obtained by pushing $\hat\Sigma_3'$ forward by the map $\hat f$ and similarly for $U_j$ and $\Gamma_j^l$. 
    	   
    	   Since $\hat f$ is transversal to $M_3$, it is easy to check that $\deg \hat f|_{M_3} = \deg \hat f$. Hence, $\hat f_\#([\hat\Sigma_3']) = (\deg \hat f)[\Sigma_3]$. Moreover, since $f$ (and thus $\hat f$) was assumed to be distance decreasing, we see that $d_{(\tilde X,g_{\tilde X})}(\Sigma_3,\Sigma_3')\leq L_0$. As such, it suffices to bound $\Sigma_3'$ in a controlled neighborhood. 
    	   
    	   To do so, we follow the argument used in Corollary \ref{coro:filling-PSC}. Because $\diam(\hat\Gamma_j^l) \leq L_0$,  we can use Proposition \ref{prop:FF-iso-cover} to find a $3$-chain $\tilde\Gamma_l^j$ with $\diam(\tilde\Gamma_j^l)\leq R(L_0)$ and $\partial\tilde\Gamma_l^j = \Gamma_l^j$ and then a $4$-chain $\tilde U_j$ with
    	   \[
    	   \partial\tilde U_j = U_j - \sum_{l=1}^{k(j)} \tilde \Gamma_j^l
    	   \]
    	   and $\diam(\tilde U_j) \leq R(L_0+2R(L_0))$. Thus,
    	   \[
    	   \Sigma_3' = \sum_{j=1}^m \partial\tilde U_j + \sum_{r=1}^q \sum_{(j,l)\in u^{-1}(r)} \tilde \Gamma_j^l. 
     	   \]
     	   and 
     	   \[
            \diam\left(\sum_{(j,l)\in u^{-1}(r)} \tilde \Gamma_j^l \right) \leq 2R(L_0)+L_0.
     	   \]
     	   We can thus complete the proof as in Corollary \ref{coro:filling-PSC}.
		%\newpage

		%To do so, we simply observe that we can push-forward the slice-and-dice decomposition of $\hat\Sigma_3'$ as obtained in \cite[Section 6]{ChodoshLi2020generalized} to obtain a decomposition of $\Sigma_3$ into $3$-chains with diameter bounded by $10\pi$, whose boundaries can be filled in an $R$-neighborhood (for $R$ large but only depending on $(X,g)$) thanks to Proposition \ref{prop:FF-iso-cover}. Arguing as in \cite[Section 6]{ChodoshLi2020generalized}, we thus find that $\hat\Sigma_3$ can be filled in a $2R$-neighborhood (taking $R$ larger if necessary, but still only depending on $(X,g)$). Thus, the assertion follows after taking $L=2R+4\pi$.
	\end{proof}
	
	Granted Lemma \ref{lemm:mapping.fill.rad}, Theorem \ref{theo:mapping} follows. Indeed, in order to prove the Urysohn width estimate Proposition \ref{prop:fill-to-Ur-w}
	it is enough to assume that the filling radius estimate holds for a multiple 
	$deg(f) \Sigma_{n-2}$ of every cycle $\Sigma_{n-2}$. The rest of the proof of Theorem \ref{theo:mapping} proceeds exactly
	as the proof of Theorem \ref{theo:main}.

	\bibliography{bib}
	\bibliographystyle{amsplain}
	
	%\printbibliography
	
\end{document}